\documentclass{article}
\usepackage{amsmath,amssymb,amsthm}
\usepackage{enumitem}
\usepackage[numbers]{natbib}
\usepackage[colorlinks=true,linkcolor=blue,citecolor=blue,urlcolor=blue]{hyperref}
\usepackage{doi}
\theoremstyle{plain}
\DeclareMathOperator{\idx}{idx}
\newtheorem{mainthm}{Theorem}

\newtheorem{theorem}{Theorem}[section]
\newtheorem{proposition}[theorem]{Proposition}
\newtheorem{lemma}[theorem]{Lemma}
\newtheorem{corollary}[theorem]{Corollary}
\theoremstyle{definition}
\newtheorem{definition}[theorem]{Definition}
\newtheorem{example}[theorem]{Example}
\newcommand{\Z}{\mathbb{Z}}
\newcommand{\F}{\mathbb{F}}
\newcommand{\N}{\mathbb{N}}
\newcommand{\Aut}{\operatorname{Aut}}
\newcommand{\PAut}{\operatorname{PAut}}
\newcommand{\Sym}{\operatorname{Sym}}
\newcommand{\Aff}{\operatorname{Aff}}
\newcommand{\AGL}{\operatorname{AGL}}
\newcommand{\Cyc}{\operatorname{Cyc}}
\newcommand{\rk}{\operatorname{rk}}
\newcommand{\ord}{\operatorname{ord}}
\DeclareMathOperator{\Tr}{Tr}
\newcommand{\gen}[1]{\langle #1\rangle}
\newcommand{\cC}{\mathcal{C}}
\newcommand{\cP}{\mathcal{P}}
\newcommand{\cQ}{\mathcal{Q}}
\newcommand{\cS}{\mathcal{S}}
\newcommand{\one}{\mathbf{1}}

\title{Permutation automorphism groups of cyclic codes I: cyclotomic association schemes}

\author{
  Yanni Wu \thanks{Email: \texttt{12031209@mail.sustech.edu.cn}} \quad and \quad Ziqing Xiang \thanks{Email: \texttt{xiangzq@sustech.edu.cn}} \\
  Southern University of Science and Technology \\
}

\date{}

\begin{document}

\maketitle

\begin{abstract}
The Berger-Charpin conjecture predicts that the permutation automorphism group of a cyclic code is generally the $q$-affine group, which is generated by the shift and the Frobenius multiplier on the index set. The word {\em generally} is not made precise in the literature, and the goal of this series of papers is to study when the Berger-Charpin conjecture or its variants hold.

In this first part, we establish a natural connection between the theory of cyclic codes and the theory of cyclotomic association schemes (equivalently, Schur rings over cyclic groups). Using it, we show that the $q$-affine group is not the correct group to expect for certain code lengths. We characterize these lengths by an arithmetic condition, and show that they have density zero.
\end{abstract}

\section{Introduction}\label{sec:intro}

When the code length $n$ and field size $q$ are coprime, which we assume throughout the paper, every cyclic code is invariant under the shift and the Frobenius multiplier (multiplication by $q$) on the index set. These two permutations generate the {\em $q$-affine group} $\Aff_q(n) := \Z_n \rtimes \gen q$, which is always a subgroup of the {\em permutation automorphism group} $\PAut(C)$ of a cyclic code $C$ of length $n$ over $\F_q$.

In 1996, Berger-Charpin \cite{BergerCharpin1996} characterized $\PAut(C)$ for affine-invariant extended cyclic codes $C$, and determined it for the primitive BCH codes over a prime field. Based on this work and other evidence, Charpin \cite{Charpin1998} conjectured that the permutation automorphism group of cyclic codes is generally the $q$-affine group. This is known as the {\em Berger-Charpin conjecture}. It is the coding-theoretic counterpart of the Babai-Godsil theorem \cite{BabaiGodsil1982} in 1982, which shows that almost all Cayley graphs have no automorphisms beyond the obvious ones.

Evidence for the Berger-Charpin conjecture has been accumulating. In 1993, Huffman-Job-Pless \cite{HuffmanJobPless1993} analyzed the multipliers of cyclic objects and of cyclic codes. In 2010, Bienert-Klopsch \cite{BienertKlopsch2010} determined the binary cyclic codes whose permutation group is primitive, which settles the Berger-Charpin conjecture for prime length. In 2013, Guenda-Gulliver \cite{GuendaGulliver2013} classified the permutation groups of cyclic codes of prime length over an arbitrary finite field.

The strongest evidence to date is due to Feng-Hollmann-Li-Xiang \cite{Feng2026} in 2026. That work used the classification of finite simple groups to determine every non-degenerate irreducible cyclic code whose permutation automorphism group is not the $q$-affine group. Such codes can all be obtained from a few explicitly described families and their descendants under certain secondary constructions.

Our first main result shows, however, that the group to be expected in general is not the $q$-affine group $\Aff_q(n)$, but the automorphism group of a cyclotomic association scheme.

\begin{mainthm}\label{thm:A}
Let $\Aut_q(n):=\Aut\big(\Cyc(\gen q,\Z_n)\big)$, where $\Cyc(\gen q,\Z_n)$ is the cyclotomic association scheme of the subgroup $\gen q\le\Z_n^\times$ acting on $\Z_n$ by multiplication, defined in Section~\ref{sec:cyc}. Then,
\[
	\bigcap_{C} \PAut(C) \;=\; \Aut_q(n),
\]
where $C$ runs over all cyclic codes of length $n$ over $\F_q$.
\end{mainthm}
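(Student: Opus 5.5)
Throughout, $\Cyc(\gen q,\Z_n)$ has as basic relations the orbitals $R_O=\{(x,y): y-x\in O\}$, where $O$ runs over the $\gen q$-orbits on $\Z_n$, i.e.\ the $q$-cyclotomic cosets $O$. Thus $\Aut_q(n)$ is the group of permutations $\sigma$ of $\Z_n$ with $\sigma(y)-\sigma(x)\in O \iff y-x\in O$ for every orbit $O$. The plan is to prove both inclusions by translating each side into the language of the group algebra $A=\F_q[\Z_n]\cong\F_q[x]/(x^n-1)$. Cyclic codes are exactly the ideals of $A$. Since $\gcd(n,q)=1$, $A$ is semisimple, and its ideals are in bijection with sets of $q$-cyclotomic cosets via the zeros of the codes in $\F_{q^m}$, where $m=\ord_n(q)$.

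For the inclusion $\Aut_q(n)\subseteq\bigcap_C\PAut(C)$, I would first show that every cyclic code $C$ is a sum of minimal ideals. Each minimal ideal has the form $e_T A$, where $e_T$ is a primitive idempotent attached to a cyclotomic coset $T$ of ``frequencies.'' Next I would write the coefficients of $e_T$ explicitly: $e_T(g)=\frac1n\sum_{t\in T}\zeta^{-tg}$, and this value depends only on the cyclotomic coset of $g$. So $e_T=\sum_O c_O\,\one_O$ lies in the Schur ring spanned by the orbit sums $\one_O$. Cyclic shifts of a codeword $u$ correspond to translations, so $C=\operatorname{span}\{\tau_a e\}$, where $e$ is the idempotent of $C$ and $a\in\Z_n$. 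The vector $\tau_a e$ is the function $y\mapsto \sum_O c_O [\,y-a\in O\,]$, a linear combination of rows of the adjacency matrices $A_O$. Hence $C$ is spanned by rows of $\sum_O c_O A_O$. Any $\sigma\in\Aut_q(n)$ satisfies $P_\sigma A_O P_\sigma^{-1}=A_O$, so it permutes these rows and preserves $C$. Note that the characteristic of $\F_q$ causes no trouble here, since we only use the relations themselves.

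For the reverse inclusion, take $\sigma$ preserving every cyclic code. I would show that $\sigma$ preserves each relation $R_O$, and do this by finding cyclic codes whose structure encodes the orbits. The natural candidate is the code $A\,\one_O$, or more usefully the minimal codes $e_TA$. A cleaner route uses the fact that $\sigma$ preserves every ideal, hence every sum of ideals. Therefore $\sigma$ commutes with the projection onto each isotypic component, which is the left multiplication by $e_T$ (its image is $e_TA$ and its kernel is the complementary ideal). So $P_\sigma$ commutes with every multiplication operator $M_{e_T}$, and hence with the algebra they span. The $e_T$ span the fixed subalgebra of $A$ under the Frobenius action, which equals $\operatorname{span}_{\F_q}\{\one_O\}$. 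Thus $P_\sigma$ commutes with multiplication by each $\one_O$, i.e.\ with the matrix $A_O$ over $\F_q$.

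The main obstacle is passing from ``$P_\sigma$ commutes with $A_O$ over $\F_q$'' to ``$\sigma$ preserves $R_O$.'' Since $A_O$ is a $0/1$ matrix and $P_\sigma$ is a permutation matrix, the identity $P_\sigma A_O P_\sigma^{-1}=A_O$ holds entrywise as a statement about $0/1$ entries, whatever the characteristic is. Conjugation by a permutation matrix just relabels entries, so equality over $\F_q$ of two $0/1$ matrices is literal equality. Hence $\sigma$ preserves each $R_O$. The remaining delicate point is the claim that the $\F_q$-span of the $e_T$ equals the span of the $\one_O$. I would verify it by dimension: both spaces have dimension equal to the number of cyclotomic cosets. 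For the containment, I would use that the $\one_O$ are Frobenius-fixed ($u(x)^q=u(x^q)$ in characteristic $p$). Then I would check that the fixed subalgebra of the semisimple algebra $\bigoplus_T e_TA$ with $e_TA\cong\F_{q^{|T|}}$ is $\bigoplus_T\F_q e_T$.
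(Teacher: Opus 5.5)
Your proposal is correct and follows essentially the paper's route. In the forward direction, the idempotent of each code lies in the span of the coset indicators, so scheme automorphisms preserve the code; your row-permutation argument is the matrix form of Lemma~\ref{lem:commute}. In the reverse direction, a permutation fixing every cyclic code commutes with each projection $e_{M_T}$, hence with all of $\cS_q(n)$, hence preserves every color. The only slip is cosmetic: convolution by $\one_O$ has matrix $A_O^{T}=A_{-O}$ rather than $A_O$, which is harmless because $O$ runs over all cyclotomic cosets.
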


{\em Cyclotomic cosets} play a fundamental role in the study of cyclic codes. They also generate an algebraic structure, called a {\em cyclotomic association scheme}, or equivalently, a {\em Schur ring over a cyclic group}. This apparently new connection between cyclic codes and cyclotomic association schemes is the basis of Theorem~\ref{thm:A}. We establish the connection and prove Theorem~\ref{thm:A} in Section~\ref{sec:link}.

The group $\Aut_q(n)$ has been studied in the theory of Schur rings. In 1933, Schur \cite{Schur1933} introduced what later came to be called {\em Schur rings}. In 1964, Wielandt \cite{Wielandt1964} developed them in order to study permutation groups. Schur rings over a cyclic group have been classified. A series of works by Leung-Ma \cite{LeungMa1990} in 1990, Muzychuk \cite{Muzychuk1994} in 1994, and Leung-Man \cite{LeungMan1996,LeungMan1998} in 1996 and 1998 showed that every such Schur ring is built from smaller ones by iterated tensor and generalized wreath products. Building on the above structural results, in 2003, Evdokimov-Ponomarenko \cite{EvdokimovPonomarenko2003} characterized the pairs $(n,q)$ for which $\Aut_q(n)=\Aff_q(n)$, and in 2008 they partially generalized it to cyclotomic association schemes over finite commutative rings \cite{EvdokimovPonomarenko2008}.

We call $n$ an {\em exceptional length} for $q$ if $\Aut_q(n) \neq \Aff_q(n)$. The Evdokimov-Ponomarenko characterization \cite{EvdokimovPonomarenko2003} is a structural characterization of exceptional lengths. Our second main result is an arithmetic one. It is deduced from and simplifies the Evdokimov-Ponomarenko characterization.

\begin{mainthm}\label{thm:B}
Let $\idx_d(q):=\frac{\varphi(d)}{\ord_d(q)}$. Then, $\Aut_q(n)\ne\Aff_q(n)$ if and only if one of the following holds.
\begin{enumerate}[label=\textup{(E\arabic*)},leftmargin=*]
\item\label{it:E1} $\idx_{n/p}(q)=\idx_n(q)$ for some prime $p\ge5$ with $p\,\|\,n$.
\item\label{it:E2} $\idx_{n/p}(q)=\idx_n(q)$ for some odd prime $p$ with $p^{2}\mid n$.
\item\label{it:E3} $\idx_{n/4}(q)=\idx_n(q)$ and $8\mid n$.
\end{enumerate}
\end{mainthm}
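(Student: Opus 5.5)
The plan is to take the Evdokimov--Ponomarenko characterization \cite{EvdokimovPonomarenko2003} of the pairs $(n,q)$ with $\Aut_q(n)=\Aff_q(n)$ as a black box and translate its structural conditions into the index conditions (E1)--(E3). As I recall, their criterion says, in terms of $K=\gen q\le\Z_n^\times$, that the cyclotomic scheme $\Cyc(K,\Z_n)$ has a nontrivial automorphism outside $\Z_n\rtimes K$ exactly when the scheme decomposes as a nontrivial generalized wreath product over sections $U/L$ of $\Z_n$ in a non-degenerate way. Equivalently, there are subgroups $L\le U$ of $\Z_n$, with $L$ nontrivial, such that every basic set (that is, every $K$-orbit) outside $U$ is a union of $L$-cosets, apart from a few small configurations such as $p\le 3$ or order-$2$ pieces where the extra automorphisms coincide with affine ones. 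My first step is to state this criterion in the form: $n$ is exceptional iff there is a prime $p\mid n$ (or $p^2=4$ when $8\mid n$) such that, for the subgroup $L$ of order $p$ (respectively $4$), every $K$-orbit of elements of order $n$ is $L$-invariant, plus the small-prime restrictions. I expect this reduction to the case of a minimal $L$ and the generators alone to be a routine consequence of the recursive structure of the Leung--Man classification.

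Second, I convert ``the orbit $Kx$ of a generator $x$ is $L$-invariant'' into arithmetic. Let $L=(n/m)\Z_n$ with $m\in\{p,4\}$, and let $x=1$. Then $K\cdot 1+L=K\cdot1$ says that $K$ contains the kernel $\ker(\Z_n^\times\to\Z_{n/m}^\times)$, at least when this kernel acts by $x\mapsto x+L$. The kernel has this form precisely when $p\mid n/p$, or when the relevant translations are realized by units. That is equivalent to $|K|=|K\bmod n/m|\cdot|\ker|$, i.e.\ $\ord_n(q)/\ord_{n/m}(q)=\varphi(n)/\varphi(n/m)$. This is exactly $\idx_{n/m}(q)=\idx_n(q)$.

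Third, I sort the cases. When $p^2\mid n$, the kernel is $\{1+k n/p\}$, of order $p$, and the orbit of $1$ under it is $1+L$; so the index equality gives $L$-invariance directly. That yields (E2) for odd $p$, while for $p=2$ one needs $m=4$ and $8\mid n$, which is (E3); the case $4\|n$ must be excluded because there the automorphism is affine. When $p\|n$, the kernel is $\Z_p^\times$ in the $p$-component, so index equality says $K\supseteq\Z_p^\times\times1$. The orbits are then unions of cosets of $L$ with the zero-class removed, and the scheme splits as a tensor product with the trivial scheme on $\Z_p$. This has automorphism group $\Sym(p)$ on that factor, which strictly exceeds $\AGL_1(p)$ iff $p\ge5$, giving (E1). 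For the converse I check that in each case the resulting wreath or tensor decomposition exhibits a non-affine automorphism, and that absent these conditions no decomposition exists.

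The main obstacle is the first step: extracting from Evdokimov--Ponomarenko's statement exactly which degenerate decompositions still give only affine automorphisms, namely the $p\in\{2,3\}$ with $p\|n$ and the $4\|n$ boundary cases. My first attempt is to handle these by explicitly computing $\Sym(2)=\AGL_1(2)$ and $\Sym(3)=\AGL_1(3)$, and the wreath automorphisms for $L$ of order $2$ with $4\|n$, and to verify that these are contained in $\Aff_q(n)$.
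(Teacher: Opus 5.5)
Your overall strategy is the paper's: quote Evdokimov--Ponomarenko and convert their invariants into index equalities through $K_{n,f}=\ker(\Z_n^\times\to\Z_f^\times)$. Your Step~2 is essentially Lemma~\ref{lem:idxmono}. Once made precise, your remark that $\gen q+L=\gen q$ forces $1+L\subseteq\Z_n^\times$, hence $K_{n,n/|L|}=1+L$, hence every prime dividing $|L|$ divides $n/|L|$, is Lemma~\ref{lem:stcrit}.

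The gap is Step~1. The black box you invoke is not the theorem that exists, and the precise form you need is not a ``routine consequence'' of Leung--Man. What you need is Theorem~\ref{thm:EP}: $\Aut_q(n)=\Aff_q(n)$ if and only if $|\Tr(n,q)|\le2$ and $\Pr(n,q)\subseteq\{2,3\}$. Here $\Tr(n,q)=\{h:\gen q+h=\gen q\}$ and $\Pr(n,q)=\{p:\ p\,\|\,n,\ \Z_p^\times\times\{1\}\le\gen q\}$. These are two independent mechanisms, and your Step~1 criterion (``the generator orbit is $L$-invariant for some $L$ of order $p$ or $4$'') only sees the first. If $p\,\|\,n$ and $|L|=p$, then $1+L$ meets $p\Z_n$, because $n/p$ is invertible mod $p$. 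So $\gen q$ is never $L$-invariant, and (E1) cannot come out of your criterion at all. Step~3 then treats $p\,\|\,n$ via the tensor factor $\Cyc(\Z_p^\times,\Z_p)$. That is the right picture, but it is not derived from what you stated.

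The more serious problem is the direction ``none of (E1)--(E3) $\Rightarrow$ affine''. The forward direction is elementary. Under (E1) you have $\Sym(p)$ acting on the $\Z_p$ tensor factor with $p\ge5$. Under (E2) or (E3) you have the maps $x\mapsto x+\ell(x\bmod p)$, with $\ell\colon\Z_p\to L$ arbitrary, where $L\le\Tr(n,q)\cap p\Z_n$ and either $|L|=p\ge3$, or $p=2$ and $|L|=4$.

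Your plan for the converse is to compute $\Sym(2)=\AGL_1(2)$, $\Sym(3)=\AGL_1(3)$ and the order-$2$ wreath pieces, and to check that otherwise ``no decomposition exists''. This only shows that each individual decomposition contributes nothing non-affine. It does not show that the whole automorphism group of the scheme lies in $\AGL_1(\Z_n)$. It neither controls automorphisms of a scheme built from several such pieces at once, nor rules out non-affine automorphisms not visible from any decomposition. That normality statement is exactly the deep content of Evdokimov--Ponomarenko's Theorem~6.1, so it must be quoted in its exact form rather than reconstructed.

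Once it is, your Steps~2--3 finish the proof as in the paper:
\begin{itemize}
\item $p\in\Pr(n,q)$ iff $K_{n,n/p}\subseteq\gen q$ iff $\idx_{n/p}(q)=\idx_n(q)$, which gives (E1) for $p\ge5$.
\item $\Tr(n,q)$ is a subgroup of a cyclic group, so $|\Tr(n,q)|>2$ iff it contains the subgroup of order $d$ for some odd prime $d$ or for $d=4$. By Lemma~\ref{lem:stcrit} this is (E2) or (E3).
\end{itemize}
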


Theorem~\ref{thm:B} is proved in Section~\ref{sec:examples}. One can check that the following $n$ are exceptional for the corresponding $q$.
\begin{enumerate}[label=\textup{(\alph*)}]
\item $q=2$ and $n = 5 (2^k - 1)$ with $k\ge3$ odd.
\item $q=2$ and $n = 3^k$ with $k \geq 2$.
\item $q=5$ and $n = 2^k$ with $k \geq 4$.
\end{enumerate}
These three families correspond to Conditions~\ref{it:E1}, \ref{it:E2} and~\ref{it:E3}, respectively, and they are Examples~\ref{ex:famA}, \ref{ex:famB} and~\ref{ex:famC}.

Proposition~\ref{prop:prim} proves that no length of the form $n = q^{m}-1$ is exceptional for $q$. Primitive BCH codes and Reed-Solomon codes all have lengths of this form.

The third main result of this paper is on the density of exceptional lengths.

\begin{mainthm}\label{thm:C}
Fix a prime power $q$. Then, the exceptional lengths $n$ for $q$ have density zero. In other words,
\[
\#\{n \in \N: n \leq x, \ \gcd(n,q)=1\ \text{and}\ \Aut_q(n)\neq\Aff_q(n)\}=o(x),
\]
as $x$ goes to infinity.
\end{mainthm}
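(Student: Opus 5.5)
The plan is to reduce Theorem~\ref{thm:C}, via Theorem~\ref{thm:B}, to elementary conditions on multiplicative orders, and then show that each of \ref{it:E1}, \ref{it:E2}, \ref{it:E3} fails for almost all $n$ by a sieve argument over primes of positive density.

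\emph{Step 1: unpack the conditions.} Write $m=n/p$ (resp.\ $n/4$). Since $\ord_n(q)=\operatorname{lcm}(\ord_m(q),\ord_{p^e}(q))$, the equality $\idx_m(q)=\idx_n(q)$ is equivalent to $\ord_n(q)=\frac{\varphi(n)}{\varphi(m)}\ord_m(q)$.
\begin{itemize}
\item[\ref{it:E1}] Here $p\,\|\,n$, so we need $\ord_n(q)=(p-1)\ord_m(q)$. This forces $\ord_p(q)=p-1$ and $\gcd(p-1,\ord_m(q))=1$. Since $p\ge5$, $p-1$ is even, so $\ord_{n/p}(q)$ must be odd.
\item[\ref{it:E2}] Here $p^2\mid n$, so we need $\ord_n(q)=p\,\ord_m(q)$. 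In particular $v_p(\ord_n(q))>v_p(\ord_m(q))$, and this jump can only come from the $p^e$-part, where $p^e\,\|\,n$.
\item[\ref{it:E3}] Similarly, $8\mid n$ and we need $v_2(\ord_n(q))\ge v_2(\ord_{n/4}(q))+1$, with the extra $2$-power coming from the $2^e$-part.
\end{itemize}

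\emph{Step 2: a sieve lemma.} Let $\mathcal P$ be a set of primes with $\sum_{r\in\mathcal P}1/r=\infty$. Then almost all $n$ are divisible by at least two primes of $\mathcal P$; more generally, for any fixed $k$, by at least $k$ of them. This is standard: use a Brun or Turán--Kubilius type bound, or simply note that the $n\le x$ with at most $k$ prime factors from $\mathcal P$ number $O\!\big(x\,e^{-\sum_{r\le x,r\in\mathcal P}1/r}(\log\log x)^{k}\big)$. Such divergence holds whenever $\mathcal P$ has positive relative density, by Chebotarev, which lets me invoke it for the sets below.

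\emph{Step 3: kill \ref{it:E1}.} Take $\mathcal P_2=\{r:\ord_r(q)\text{ even}\}$. This contains the primes $r$ with $-1\in\gen q\bmod r$, and it has positive density by Hasse's theorem (a Chebotarev argument in $\mathbb Q(\zeta_{2^j},q^{1/2^j})$). For almost all $n$, two distinct primes of $\mathcal P_2$ divide $n$, so for every $p$ the quotient $n/p$ is still divisible by one of them. Then $\ord_{n/p}(q)$ is even, contradicting Step~1.

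\emph{Step 4: kill \ref{it:E2} and \ref{it:E3}.} Fix a large $y$. The $n$ divisible by $p^2$ for some prime $p>y$ have upper density at most $\sum_{p>y}p^{-2}$, which tends to $0$ as $y\to\infty$. For the finitely many primes $p\le y$ (including $p=2$ for \ref{it:E3}), fix a large $a$. By Chebotarev, the primes $r$ with $p^a\mid\ord_r(q)$ have positive density. Hence almost all $n$ have two such $r$, and then $v_p(\ord_{n/p}(q))\ge a$ (resp.\ $v_p(\ord_{n/4}(q))\ge a$).

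The jump in Step~1 then requires $v_p(\ord_{p^e}(q))>a$. By lifting the exponent, $v_p(\ord_{p^e}(q))\le e+c_{p,q}$, so this needs $p^{\,a-c_{p,q}}\mid n$, which has density at most $p^{-(a-c_{p,q})}$. Letting $a\to\infty$ and then $y\to\infty$ gives density zero.

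The main obstacle is the positive-density input for Step~3: one must handle the case where $q$ is a perfect power, in particular a square, so that ``$q$ is a non-residue'' is unavailable. I would use the primes $r\mid q^{2^j}+1$ with $j$ chosen larger than $v_2$ of the exponent making $q$ a perfect power, via the Kummer-extension degree computation. Steps~1, 2 and~4 are routine.
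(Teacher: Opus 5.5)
Your proposal is correct and is essentially the paper's own proof: the paper also reduces via Theorem~\ref{thm:B}, kills \ref{it:E1} using the primes $\ell$ with $\ord_\ell(q)$ even (Lemma~\ref{lem:key} with $r=2$), and kills \ref{it:E2}/\ref{it:E3} by combining a tail bound over large prime powers with Lemma~\ref{lem:local}. That lemma says an $n$ satisfying the condition at $p$ with $v_p(n)=a$ has no prime factor $\ell\ne p$ with $p^{a-c}\mid\ord_\ell(q)$, which is the contrapositive of your threshold argument. The positive-density input is simply cited from Wiertelak's theorem, which already covers every $q\ge2$ including perfect powers, so your ``main obstacle'' is settled by citation (note also that for fixed $j$ the primes dividing $q^{2^j}+1$ form a finite set, so that particular sketch would need rephrasing).
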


The proof of Theorem~\ref{thm:C} is given in Section~\ref{sec:density}. It is obtained by analyzing how often Conditions~\ref{it:E1}, \ref{it:E2} and~\ref{it:E3} hold.

The paper is organized as follows. The theory of cyclotomic association schemes and the theory of cyclic codes are reviewed in Section~\ref{sec:cyc} and Section~\ref{sec:codes}, respectively. Section~\ref{sec:link} establishes a link between them, and proves Theorem~\ref{thm:A}. Section~\ref{sec:examples} turns to exceptional lengths and proves Theorem~\ref{thm:B}. Finally, Section~\ref{sec:density} bounds how often such lengths occur, and proves Theorem~\ref{thm:C}.

\subsection*{Notation}\label{ssec:notation}
Throughout the paper, assume that $n$ and $q$ are coprime. Let $\N$ be the set of positive integers, let $\Z_n := \Z/n\Z$, and let $\Z_n^\times$ be its group of units, which is of order $\varphi(n)$. For a positive integer $d$ coprime to $q$, the order of $q$ in $\Z_d^\times$ is denoted by $\ord_d(q)$, and let $\idx_d(q):=\varphi(d)/\ord_d(q)$. For elements $g_1,\dots,g_r$ of a group, $\gen{g_1,\dots,g_r}$ is the subgroup that they generate. For a prime $p$ and a nonzero integer $x$, we write $v_p(x)$ for the exponent of $p$ in $x$, and $p^{a}\,\|\,x$ to mean that $v_p(x)=a$. We write $f\ll g$, or equivalently $g\gg f$, to mean that $f\le Cg$ for a positive constant $C$ that is allowed to depend on $q$.

\section{Cyclotomic association schemes}\label{sec:cyc}

The concept of an association scheme, or one of its variants, was discovered independently several times. Two of those lines of development converge on the theory used here.

The first is the theory of association schemes. The name {\em association scheme} was coined by Bose-Shimamoto \cite{BoseShimamoto1952} in 1952, and Bose-Mesner \cite{BoseMesner1959} identified their algebraic form in 1959. In 1973, Delsarte \cite{Delsarte1973} made them into a systematic tool for coding theory and design theory, and introduced the {\em cyclotomic association schemes over finite fields}. Their counterparts over finite rings were studied by Goldbach-Claasen \cite{GoldbachClaasen1992,GoldbachClaasen1992a} in 1992.

The second is the theory of {\em Schur rings}, recalled in Section~\ref{sec:intro}, which runs from Schur \cite{Schur1933} and Wielandt \cite{Wielandt1964} to the classification over a cyclic group in the 1990s. That the two theories describe the same objects was recognized only later, and Evdokimov-Ponomarenko \cite{EvdokimovPonomarenko2003,EvdokimovPonomarenko2008} carried the combined theory further in the 2000s.

In this paper, we choose to use the language of cyclotomic association schemes. This section presents the basic definitions, keeping only what is necessary for our purposes. A reader seeking the full theory may consult Bannai-Ito \cite{BannaiIto1984} or Zieschang \cite{Zieschang2005} for association schemes, and the survey of Muzychuk-Ponomarenko \cite{MuzychukPonomarenko2009} for Schur rings. Our schemes are the homogeneous coherent configurations, a class introduced independently by Higman \cite{Higman1971} and by Weisfeiler-Leman \cite{WeisfeilerLehman1968}, for which we refer to Chen-Ponomarenko \cite{ChenPonomarenko2019}.

\subsection{Colorings and schemes}\label{ssec:defs}

The definition given here is slightly different from the classical ones in \cite{BannaiIto1984,Zieschang2005}, but is equivalent to those. We present a scheme as a coloring of ordered pairs, rather than as a partition of $X\times X$.

A \emph{color set} is a triple $\Sigma=(S,0,{}^*)$ consisting of a finite set $S$, an element $0\in S$, and an involution $s\mapsto s^*$ of $S$ fixing $0$.

\begin{definition}\label{def:scheme}
Let $\Sigma$ be a color set, and $X$ a finite set. An \emph{association scheme} on $X$ over $\Sigma$ is a pair $\cC=(X,\pi)$ with $\pi\colon X\times X\to\Sigma$ satisfying the following axioms:
\begin{enumerate}[label=\textup{(A\arabic*)},leftmargin=*]
\item\label{ax:surj} $\pi$ is surjective;
\item\label{ax:diag} $\pi(x,y)=0$ if and only if $x=y$;
\item\label{ax:star} $\pi(y,x)=\pi(x,y)^*$ for all $x,y\in X$;
\item\label{ax:reg} for all $r,s,t\in\Sigma$ there is an integer $c^t_{rs}$ such that
\[
\#\{y\in X:\pi(x,y)=r,\ \pi(y,z)=s\}=c^t_{rs},
\qquad\text{whenever }\pi(x,z)=t.
\]
\end{enumerate}
Here $\Sigma(\cC) := \Sigma$ is the \emph{color set of $\cC$}, the integers $c^t_{rs}$ are the \emph{structure constants}, and $\rk(\cC):=|\Sigma(\cC)|$ is the \emph{rank} of $\cC$.
\end{definition}

The \emph{automorphism group} of $\cC$ is
\[
\Aut(\cC):=\{\sigma\in\Sym(X):\pi\circ(\sigma\times\sigma)=\pi\},
\]
where $\Sym(X)$ is the group of all bijections $X \to X$.

\subsection{The scheme algebra}
The structure constants make the color set into the basis of an algebra.

\begin{definition}\label{def:schemealg}
Let $k$ be a commutative ring and $\cC$ an association scheme over $\Sigma$. The \emph{scheme algebra} $k\cC$ is the free $k$-module with basis $\Sigma$, with multiplication determined by
\[
s\cdot t := \sum_{u\in\Sigma}c^u_{st}\,u.
\]
\end{definition}

The axioms of association schemes ensure that the scheme algebra $k\cC$ is an associative $k$-algebra with identity, free of rank $\rk(\cC)$.

\subsection{The cyclotomic association scheme of a subgroup}\label{ssec:cycdef}

Let $R$ be a finite ring with identity, which is not assumed to be commutative. Let $K\le R^\times$ be a subgroup of the unit group $R^\times$ of $R$, acting on $R$ by right multiplication. Write $aK:=\{ak:k\in K\}$ for the orbit of $a\in R$, and write $R/K$ for the set of orbits. Since $0K=\{0\}$ and $(-a)k=-(ak)$, the set $R/K$ is a color set with distinguished element $\{0\}$ and involution $(aK)^\ast:=(-a)K$.

\begin{definition}\label{def:cyc}
The \emph{cyclotomic association scheme} of $K$ over $R$ is the pair $\Cyc(K,R):=(R,\pi)$ over the color set $R/K$, where
\[
\pi(x,y) := (y-x)K.
\]
\end{definition}
It is direct to verify that this is an association scheme. The next proposition identifies its affine automorphisms.

\begin{proposition}\label{prop:cycscheme}
Let
\begin{gather*}
\AGL_1(R):=\{x\mapsto xa+b:\ a\in R^\times,\ b\in R\},\\
\Aff_K(R):=\{x\mapsto xk+b:\ k\in K,\ b\in R\}.
\end{gather*}
Then, $\Aut\big(\Cyc(K,R)\big)\cap\AGL_1(R)=\Aff_K(R)$.
\end{proposition}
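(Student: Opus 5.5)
We prove the two inclusions separately, working directly with the coloring $\pi(x,y)=(y-x)K$ of Definition~\ref{def:cyc}. The inclusion $\Aff_K(R)\subseteq\Aut\big(\Cyc(K,R)\big)\cap\AGL_1(R)$ is a direct computation. Since $K\le R^\times$, every map $\sigma(x)=xk+b$ with $k\in K$ and $b\in R$ lies in $\AGL_1(R)$. For all $x,y\in R$,
\[
\pi(\sigma x,\sigma y)=\big((y-x)k\big)K=(y-x)(kK)=(y-x)K=\pi(x,y),
\]
because $kK=K$. Hence $\sigma$ preserves $\pi$, that is, $\pi\circ(\sigma\times\sigma)=\pi$.

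For the reverse inclusion, let $\sigma(x)=xa+b$ with $a\in R^\times$ and $b\in R$, and suppose $\sigma\in\Aut\big(\Cyc(K,R)\big)$. Translations $x\mapsto x+b$ are automorphisms by the first part with $k=1$. The automorphisms form a group, so composing $\sigma$ with the inverse translation shows that $\tau\colon x\mapsto xa$ is also an automorphism. It remains to show $a\in K$, since then $\sigma=(x\mapsto x+b)\circ\tau\in\Aff_K(R)$.

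To get $a\in K$, apply the invariance $\pi(\tau x,\tau y)=\pi(x,y)$ to the pair $(x,y)=(0,1)$:
\[
aK=(1\cdot a-0)K=\pi(0,a)=\pi(\tau 0,\tau 1)=\pi(0,1)=1K=K.
\]
Therefore $a=a\cdot 1\in aK=K$.

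Nothing here is a real obstacle. The only point needing care is that $R$ may be noncommutative. Consistency requires that $K$ acts on the right in $\pi$, that the affine maps multiply on the right as $x\mapsto xa$, and that one uses $\big((y-x)a\big)K=(y-x)(aK)$. With that bookkeeping, which holds by associativity, the argument goes through unchanged.
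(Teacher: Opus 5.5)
Your proof is correct and follows essentially the paper's argument: compute $\pi(\sigma x,\sigma y)=(y-x)aK$, specialize to $y-x=1$ to force $aK=K$ and hence $a\in K$, and get the converse from $kK=K$. Composing with the inverse translation is harmless but unnecessary, since $\sigma y-\sigma x=(y-x)a$ already eliminates $b$.
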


\begin{proof}
Consider a map $\sigma\colon x\mapsto xa+b$ with $a\in R^\times$ and $b\in R$. Then $\sigma y-\sigma x=(y-x)a$, so
\[
\pi(\sigma x,\sigma y)=(y-x)aK,
\]
and $\sigma$ preserves the coloring if and only if $(y-x)aK=(y-x)K$ for all $x$ and $y$. Taking $y-x=1$ shows that this forces $aK=K$, that is, $a\in K$; and conversely $aK=K$ gives $(y-x)aK=(y-x)K$ for every $x$ and $y$. Hence a map in $\AGL_1(R)$ preserves the coloring if and only if $a\in K$, which is the stated equality.
\end{proof}

\section{Cyclic codes}\label{sec:codes}

There are several equivalent ways to define cyclic codes. They can be presented as shift-invariant subspaces of $\F_q^n$, as ideals of the quotient ring $\F_q[x]/(x^n-1)$, as ideals of the group algebra $\F_q\Z_n$, a point of view introduced for abelian codes by Berman \cite{Berman1967} in 1967 and by MacWilliams \cite{MacWilliams1970} in 1970, or as ideals of the function space $\F_q^{\Z_n}$ under convolution. We choose the last one, which is the dual of the group algebra viewpoint, since it fits best with the theory of cyclotomic association schemes.

In this section, we review some classical concepts and results in the theory of cyclic codes. We give only statements, and readers can refer to Huffman-Pless \cite[Ch.~4]{HuffmanPless2003} or MacWilliams-Sloane \cite[Ch.~7--8]{MacWilliamsSloane1977} for detailed proofs.

Let $\Omega := \Z_n$. Then, $\F_q^\Omega$ is an $\F_q$-algebra under the convolution product $\ast$, defined by
\[
(f*g)(x):=\sum_{y\in\Omega}f(x-y)g(y)\qquad \text{for $f, g \in \F_q^\Omega$.}
\]
We have the identification of algebras $\F_q[x]/(x^n-1)\ \xrightarrow{\ \sim\ }\ \F_q^\Omega$ given by
\begin{equation}\label{eq:poly}
\sum_ic_ix^i\longmapsto (i \mapsto c_i).
\end{equation}
For a subset $S$ of a set $X$, let $\one_S\colon X\to\F_q$ be its indicator function, and for $b\in\Omega$, let $\delta_b:=\one_{\{b\}}$. Then, $\delta_0$ is the identity of the convolution product, and left-multiplication by $\delta_1$ is the cyclic shift. So, a subspace of $\F_q^\Omega$ is an ideal precisely when it is closed under the shift.

\begin{definition}\label{def:cyccode}
A \emph{cyclic code} of length $n$ over $\F_q$ is an ideal of $\F_q^\Omega$.
\end{definition}

\subsection{Automorphisms}\label{ssec:perms}
The symmetric group $\Sym(\Omega)$ acts on $\F_q^\Omega$ by $(\sigma c)(i):=c(\sigma^{-1}i)$, and the \emph{permutation automorphism group} of a code $C$ is
\[
\PAut(C):=\{\sigma\in\Sym(\Omega):\sigma C=C\}.
\]
The {\em translations} $\tau_b\colon i\mapsto i+b$ for $b\in\Omega$ and the {\em multiplier} $\mu_q\colon i\mapsto qi$ are always permutation automorphisms. Since $\tau_1$ generates all the translations, the group they generate is
\[
\Aff_q(n):=\gen{\tau_1,\ \mu_q}=\Z_n\rtimes\gen q\ \le\ \Sym(\Omega).
\]
Since $\mu_q$ is an automorphism of the group $\Z_n$, its action on functions is an automorphism of the algebra $\F_q^\Omega$, and under Eq.~\eqref{eq:poly} that action is the Frobenius map $x\mapsto x^q$ in $\F_q[x]/(x^n-1)$.

\subsection{Characters}\label{ssec:chars}

Cyclic codes are closely related to characters, which we now set up.

Let $F:=\F_{q^{\ord_n(q)}}$, the smallest extension of $\F_q$ containing a primitive $n$-th root of unity, and fix such a root $\zeta$. For $a\in\Omega$ let $\chi_a\in F^\Omega$ be the character $\chi_a(i)=\zeta^{-ai}$, and extend the action of $\Sym(\Omega)$ to $F^\Omega$ by the same formula. Since $n$ is invertible in $F$, and $\sum_{i}\zeta^{ai}$ equals $n$ for $a=0$ and $0$ otherwise, we have the following.
\begin{enumerate}[label=\textup{(C\arabic*)},leftmargin=*]
\item $\{\chi_a\}_{a\in\Omega}$ is an $F$-basis of $F^\Omega$;
\item $\chi_a*\chi_a=n\chi_a$ and $\chi_a*\chi_b=0$ for $a\ne b$, so the $\frac1n\chi_a$ are orthogonal idempotents summing to $\delta_0$, and $F^\Omega=\bigoplus_aF\chi_a$ is the decomposition into minimal ideals;
\item\label{ch:mu} $\mu_q\chi_a=\chi_{q^{-1}a}$.
\end{enumerate}
For a subspace $C\subseteq\F_q^\Omega$, let $C\otimes F$ denote its $F$-span in $F^\Omega$.

The two subsections that follow record the classical results we need: a cyclic code is determined by a $\gen q$-invariant set of characters, it is generated by an idempotent, and it is a direct sum of minimal ones.

\subsection{Cyclotomic cosets and the spectrum}\label{ssec:spec}

The orbits of $\gen q$ acting on $\Z_n$ by multiplication are the {\em $q$-cyclotomic cosets}. We write $T$ for such an orbit, regarded as an element of the orbit set $\Z_n/\gen q$.

\begin{proposition}\label{prop:spectrum}
For every cyclic code $C$ of length $n$ over $\F_q$ there is a unique subset $J(C)\subseteq\Omega$ with $C\otimes F=\bigoplus_{a\in J(C)}F\chi_a$, and $J(C)$ is $\gen q$-invariant. The map $C\mapsto J(C)$ is an inclusion-preserving bijection from the cyclic codes of length $n$ onto the $\gen q$-invariant subsets of $\Omega$, with inclusion-preserving inverse.
\end{proposition}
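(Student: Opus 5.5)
The plan is to combine the character decomposition (C1)--(C3) with Galois descent from $F$ to $\F_q$.

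\textbf{Existence of $J(C)$ and its invariance.} Let $C$ be a cyclic code. Then $C\otimes F$ is an ideal of $F^\Omega$: it is the $F$-span of an ideal of $\F_q^\Omega$, and the convolution product is $F$-bilinear. By (C2), every ideal $I$ of $F^\Omega$ has the form $\bigoplus_{a\in J}F\chi_a$ with $J=\{a:\chi_a\in I\}$. Indeed, for $f\in I$ write $f=\sum_a c_a\chi_a$; then $\frac1n\chi_a*f=c_a\chi_a\in I$. Uniqueness of $J$ follows from (C1). For invariance, note that $\mu_q$ preserves $C$ and acts $F$-linearly on $F^\Omega$, so it preserves $C\otimes F$. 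By (C3), $\mu_q\chi_a=\chi_{q^{-1}a}$, and therefore $J(C)$ is stable under multiplication by $q^{-1}$, hence by $\gen q$.

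\textbf{Bijectivity.} Let $\phi$ be the coefficientwise Frobenius $c\mapsto c^q$ on $F^\Omega$. It is a $q$-semilinear ring automorphism of $F^\Omega$ whose fixed points are exactly $\F_q^\Omega$. We compute $\phi(\chi_a)(i)=\zeta^{-aqi}$, so $\phi(\chi_a)=\chi_{qa}$. Given a $\gen q$-invariant $J$, set $V=\bigoplus_{a\in J}F\chi_a$. Then $V$ is a $\phi$-stable $F$-subspace and an ideal.

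The key step is Galois descent: a $\phi$-stable $F$-subspace $V$ satisfies $V=(V\cap\F_q^\Omega)\otimes F$. I would prove this by the standard Hilbert 90 / trace argument. If $v\in V$, then for every $\lambda\in F$ the element $\Tr(\lambda v):=\sum_{j}\phi^j(\lambda v)$, with the sum over the degree $[F:\F_q]$, lies in $V\cap\F_q^\Omega$. Since the trace form $F\times F\to\F_q$ is nondegenerate, the coordinates of $v$ are recovered from these traces, which gives $v\in(V\cap\F_q^\Omega)\otimes F$.

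Now define $C_J:=V\cap\F_q^\Omega$. It is an ideal of $\F_q^\Omega$, and by descent $C_J\otimes F=V$, so $J(C_J)=J$; this gives surjectivity. Conversely, for any subspace $C\subseteq\F_q^\Omega$ we have $(C\otimes F)\cap\F_q^\Omega=C$, because $F$ is free over $\F_q$. Hence $C=C_{J(C)}$, which gives injectivity.

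\textbf{Order-preservation.} If $C\subseteq C'$, then $C\otimes F\subseteq C'\otimes F$, so $J(C)\subseteq J(C')$. If $J\subseteq J'$, then $V\subseteq V'$, so $C_J\subseteq C_{J'}$.

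The main obstacle is the descent step, $V=(V\cap\F_q^\Omega)\otimes F$. Everything else is formal bookkeeping with orthogonal idempotents.
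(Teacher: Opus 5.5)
Your proof is correct, and it takes a different route from the one the paper relies on. The paper does not prove this proposition; it defers to Huffman--Pless and MacWilliams--Sloane, where the argument runs through the polynomial model. There, $\F_q[x]/(x^n-1)$ is a principal ideal ring, so cyclic codes correspond to monic divisors $g$ of $x^n-1$. Over $\F_q$, $x^n-1$ is a product of distinct minimal polynomials whose root sets are $\{\zeta^{b}:b\in T\}$ for the cyclotomic cosets $T$, because the Frobenius generates $\operatorname{Gal}(F/\F_q)$. Then $J(C)$ is the set of $a$ with $g(\zeta^a)\neq0$. You instead stay inside the convolution and character picture (C1)--(C3): you read off the ideals of $F^\Omega$ from the orthogonal idempotents $\frac1n\chi_a$, and you replace the factorization of $x^n-1$ by an explicit Galois descent, via the trace form, for the Frobenius-stable ideal $\bigoplus_{a\in J}F\chi_a$. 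Both arguments use the same input: Frobenius moves $\chi_a$ (respectively $\zeta^a$) along cyclotomic cosets, and $\F_q$-rationality is the same as Frobenius-invariance. In the textbook route the descent is hidden in the fact that a polynomial over $F$ with a Frobenius-stable set of simple roots has coefficients in $\F_q$. The polynomial route also gives the generator polynomial and $\dim C=n-\deg g$ directly. Yours needs no generator polynomials and carries over verbatim to abelian codes, i.e.\ ideals of $\F_q^{G}$ for a finite abelian group $G$ of order prime to $q$. One small simplification: for the $\gen q$-invariance of $J(C)$ you can use $\phi$ instead of $\mu_q$. Since $\phi$ fixes $C$ pointwise and is semilinear, $\phi(C\otimes F)=C\otimes F$, and $\phi\chi_a=\chi_{qa}$. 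This avoids appealing to $\mu_q\in\PAut(C)$, although that appeal is harmless, because $\mu_q c$ equals the $q$-th convolution power of $c$ and so lies in $C$.
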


\begin{definition}\label{def:spectrum}
The set $J(C)$ is the \emph{spectrum} of $C$. For $T\in\Z_n/\gen q$, the cyclic code with spectrum $T$ is denoted $M_T$.
\end{definition}

The set $J(C)$ is the classical set of \emph{nonzeros} of $C$: an exponent $a$ lies in $J(C)$ exactly when some codeword of $C$, read as a polynomial through Eq.~\eqref{eq:poly}, does not vanish at $\zeta^{a}$. Its complement $\Omega\setminus J(C)$ is the \emph{defining set} of $C$. The sign in $\chi_a(i)=\zeta^{-ai}$ is chosen so that both agree with the usual convention of the coding literature.

The $\gen q$-invariant subsets of $\Omega$ are exactly the unions of cyclotomic cosets, and the minimal nonempty ones are the single cosets. So Proposition~\ref{prop:spectrum} has the following consequence.

\begin{corollary}\label{cor:count}
There are exactly $2^{|\Z_n/\gen q|}$ cyclic codes of length $n$ over $\F_q$, and the minimal ones are the $M_T$ for $T\in\Z_n/\gen q$.
\end{corollary}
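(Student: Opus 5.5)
The corollary follows from Proposition~\ref{prop:spectrum} by counting. The plan is to describe the $\gen q$-invariant subsets of $\Omega$ explicitly and then transport both the count and the minimality through the bijection $C\mapsto J(C)$.

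First I will show that a subset $J\subseteq\Omega$ is $\gen q$-invariant exactly when it is a union of $q$-cyclotomic cosets. If $J$ is invariant and $a\in J$, then the whole orbit $a\gen q$ lies in $J$, so $J=\bigcup_{a\in J}a\gen q$. Conversely, a union of orbits is invariant because each orbit is. The cosets partition $\Omega$, since they are the orbits of a group action. Therefore the map sending a set $\mathcal T\subseteq\Z_n/\gen q$ to $\bigcup_{T\in\mathcal T}T$ is a bijection from the power set of $\Z_n/\gen q$ onto the invariant subsets of $\Omega$. Injectivity uses disjointness of the cosets, and surjectivity is the observation just made. This bijection preserves inclusion in both directions. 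So there are exactly $2^{|\Z_n/\gen q|}$ invariant subsets.

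By Proposition~\ref{prop:spectrum}, $C\mapsto J(C)$ is a bijection from the cyclic codes onto these invariant subsets. This gives the count. The empty spectrum corresponds to the zero code, which I will include in the count.

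For minimality, I interpret the minimal codes as the minimal nonzero cyclic codes. Proposition~\ref{prop:spectrum} says the bijection and its inverse both preserve inclusion, so it is an order isomorphism of posets. Order isomorphisms carry minimal elements to minimal elements. The nonzero codes correspond to the nonempty invariant subsets, since $J(0)=\emptyset$ by uniqueness in Proposition~\ref{prop:spectrum}. Under the Boolean-lattice identification above, the minimal nonempty invariant subsets are the single cosets $T$. Hence the minimal nonzero codes are exactly the codes with spectrum $T$, which are the $M_T$ of Definition~\ref{def:spectrum}.

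None of these steps is a real obstacle. The only care needed is in the bookkeeping that the zero code corresponds to the empty spectrum and that "minimal" means minimal among nonzero codes.
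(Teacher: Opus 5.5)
Your proof is correct and takes the same approach as the paper, which deduces the corollary from Proposition~\ref{prop:spectrum} by noting that the $\gen q$-invariant subsets are exactly the unions of cyclotomic cosets and that the minimal nonempty ones are the single cosets. You additionally spell out the details the paper leaves implicit: the Boolean-lattice bijection, the fact that $J(0)=\emptyset$, and the reading of ``minimal'' as minimal among nonzero codes.
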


The spectrum is recorded through the isomorphism $a\mapsto\chi_a$ of $\Omega$ with its character group. Note that it depends on the choice of primitive root $\zeta$: a different primitive root replaces $J(C)$ by $uJ(C)$ for some $u\in\Z_n^\times$. All statements below are equivariant under this change.

\subsection{Idempotents}

\begin{lemma}[{\cite[Thm.~4.3.8]{HuffmanPless2003}}]\label{lem:idem}
Let $C$ be a cyclic code and let
\[
e_C:=\frac1n\sum_{a\in J(C)}\chi_a.
\]
Then, $e_C$ is an idempotent in $\F_q^\Omega$. Moreover, $C=e_C*\F_q^\Omega$ and $\F_q^\Omega=C\oplus C'$, where $C':=(\delta_0-e_C)*\F_q^\Omega$ with $J(C')=\Omega\setminus J(C)$.
\end{lemma}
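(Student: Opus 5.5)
The plan is to work inside $F^\Omega$, where the characters diagonalize convolution, and then come back to $\F_q^\Omega$. There are three things to check: that $e_C$ has coefficients in $\F_q$, that $C=e_C*\F_q^\Omega$, and the decomposition with its spectrum.

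\emph{Idempotence and rationality.} By (C2), $\chi_a*\chi_b=n\delta_{ab}\chi_a$, so
\[
e_C*e_C=\frac1{n^2}\sum_{a,b\in J(C)}\chi_a*\chi_b=\frac1n\sum_{a\in J(C)}\chi_a=e_C .
\]
To see that $e_C$ takes values in $\F_q$, I would show it is fixed by the Frobenius of $F/\F_q$ applied pointwise. For each $i$,
\[
e_C(i)^q=\frac1n\sum_{a\in J(C)}\zeta^{-qai},
\]
using $n^q=n$ in $\F_q$. Since $J(C)$ is $\gen q$-invariant by Proposition~\ref{prop:spectrum}, the map $a\mapsto qa$ permutes $J(C)$, so the sum is unchanged and $e_C(i)^q=e_C(i)$. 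Hence $e_C(i)\in\F_q$.

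\emph{Generation.} The idempotent $e_C$ acts on $F^\Omega=\bigoplus_a F\chi_a$ as the projection onto $\bigoplus_{a\in J(C)}F\chi_a=C\otimes F$. So $e_C*F^\Omega=C\otimes F$, and $e_C*c=c$ for every $c\in C$.

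For $C\subseteq e_C*\F_q^\Omega$: each $c\in C$ satisfies $c=e_C*c$, which lies in $e_C*\F_q^\Omega$.

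For the reverse inclusion: $e_C*\F_q^\Omega$ lies in $\F_q^\Omega$ because $e_C$ does, and it lies in $C\otimes F$. It remains to show $(C\otimes F)\cap\F_q^\Omega=C$. This is the main (mild) obstacle. It holds because a subspace defined over $\F_q$ satisfies the following: an $\F_q$-basis of $C$, extended to an $\F_q$-basis of $\F_q^\Omega$, stays an $F$-basis of $F^\Omega$. An $\F_q$-rational vector then lies in $C\otimes F$ only if its coordinates on the complementary basis vectors vanish, so it lies in $C$.

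\emph{Decomposition.} Put $e'=\delta_0-e_C$. Because $\delta_0=\frac1n\sum_{a\in\Omega}\chi_a$ by (C2), we get $e'=\frac1n\sum_{a\notin J(C)}\chi_a$. This is an idempotent in $\F_q^\Omega$ that is orthogonal to $e_C$. Consequently every $f\in\F_q^\Omega$ splits as $f=e_C*f+e'*f$. The sum is direct: if $x$ lies in both parts, then $x=e_C*x=e_C*e'*x=0$.

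Finally, $C'=e'*\F_q^\Omega$ is an ideal, and $C'\otimes F=e'*F^\Omega=\bigoplus_{a\notin J(C)}F\chi_a$. By the uniqueness in Proposition~\ref{prop:spectrum}, $J(C')=\Omega\setminus J(C)$.
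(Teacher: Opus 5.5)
Your argument is correct. The paper gives no proof of its own here and simply quotes Huffman--Pless, so the useful comparison is with the standard textbook argument.

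That argument starts from the generator polynomial $g$ of $C$. It uses the separability of $x^n-1$ to write $ag+bh=1$ with $gh=x^n-1$, and takes the class of $ag$ as the generating idempotent. Its values at the powers of $\zeta$, and hence the formula $\frac1n\sum_{a\in J(C)}\chi_a$, are identified only afterwards.

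You run in the opposite direction. You start from the Fourier formula and get:
\begin{itemize}
\item idempotence from (C2);
\item rationality from the Frobenius on values together with the $\gen q$-invariance of $J(C)$;
\item generation from the projection property plus descent.
\end{itemize}
This keeps everything inside the character framework of Section~\ref{ssec:chars} and shows exactly where $\gen q$-invariance enters, namely in the rationality of $e_C$. The price is that you take Proposition~\ref{prop:spectrum} as input, whereas the polynomial route needs no spectral information beforehand.

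Since you do rely on Proposition~\ref{prop:spectrum}, you can drop the separate descent step $(C\otimes F)\cap\F_q^\Omega=C$. The ideal $e_C*\F_q^\Omega$ lies in $\F_q^\Omega$ by rationality, and its $F$-span is $e_C*F^\Omega=\bigoplus_{a\in J(C)}F\chi_a$. So its spectrum is $J(C)$, and it equals $C$ by injectivity of $C\mapsto J(C)$. This is exactly the argument you already give for $C'$.
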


\begin{lemma}[{\cite[Ch.~4]{HuffmanPless2003}}]\label{lem:sumspec}
If $C$ and $D$ are cyclic codes of length $n$ over $\F_q$, then $C+D$ is a cyclic code with $J(C+D)=J(C)\cup J(D)$. In particular, $\F_q^\Omega=\bigoplus_{T\in\Z_n/\gen q}M_T$, and for each $T$, the code $M_T':=\sum_{T'\ne T}M_{T'}$ satisfies $M_T'=(\delta_0-e_{M_T})*\F_q^\Omega$ and $\F_q^\Omega=M_T\oplus M_T'$.
\end{lemma}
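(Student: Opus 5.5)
The statement just above is Lemma~\ref{lem:sumspec}, and I would deduce it from Proposition~\ref{prop:spectrum} and Lemma~\ref{lem:idem} by passing to $F^\Omega$. There everything is diagonalized by the characters $\chi_a$.

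\emph{First claim.} The sum $C+D$ is a subspace of $\F_q^\Omega$. It is closed under convolution by $\F_q^\Omega$, since $f*(c+d)=f*c+f*d$. So $C+D$ is an ideal, that is, a cyclic code. Tensoring with $F$ commutes with sums of subspaces, so
\[
(C+D)\otimes F=C\otimes F+D\otimes F=\bigoplus_{a\in J(C)}F\chi_a+\bigoplus_{a\in J(D)}F\chi_a=\bigoplus_{a\in J(C)\cup J(D)}F\chi_a,
\]
because the $\chi_a$ form a basis. By the uniqueness in Proposition~\ref{prop:spectrum}, $J(C+D)=J(C)\cup J(D)$.

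\emph{Decomposition $\F_q^\Omega=\bigoplus_T M_T$.} Iterating the first claim gives $J\bigl(\sum_T M_T\bigr)=\bigcup_T T=\Omega$. The code $\F_q^\Omega$ itself has spectrum $\Omega$, since its $F$-span is all of $F^\Omega$. So the bijectivity in Proposition~\ref{prop:spectrum} gives $\sum_T M_T=\F_q^\Omega$. For directness, compare dimensions. Each $\dim_{\F_q}M_T=\dim_F(M_T\otimes F)=|T|$, since the dimension of an $\F_q$-subspace is preserved under extension of scalars. The cosets partition $\Omega$, so $\sum_T\dim M_T=n=\dim\F_q^\Omega$, and hence the sum is direct.

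\emph{The code $M_T'$.} The first claim gives $J(M_T')=\Omega\setminus T$. Lemma~\ref{lem:idem} applied to $C=M_T$ gives $C'=(\delta_0-e_{M_T})*\F_q^\Omega$ with $J(C')=\Omega\setminus T$. Two cyclic codes with the same spectrum are equal by Proposition~\ref{prop:spectrum}, so $M_T'=C'$. Then $\F_q^\Omega=M_T\oplus M_T'$ follows from Lemma~\ref{lem:idem}.

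The only delicate point is the bookkeeping $\dim_{\F_q}C=\dim_F(C\otimes F)$, together with $(C+D)\otimes F=C\otimes F+D\otimes F$. Both hold because $F$ is flat over the field $\F_q$, and $C\otimes F$ here means the $F$-span inside $F^\Omega$, which is isomorphic to the abstract tensor product because an $\F_q$-basis of $C$ stays $F$-linearly independent. I expect no real obstacle beyond this.
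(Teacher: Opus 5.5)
Your argument is correct. The paper gives no proof of this lemma and defers to Huffman--Pless. The textbook route obtains these facts either from generator polynomials (the generator of $C+D$ is the $\gcd$ of the two generator polynomials) or from generating idempotents. In the paper's notation, $\chi_a*\chi_a=n\chi_a$ and $\chi_a*\chi_b=0$ for $a\neq b$ give $e_C*e_D=\frac1n\sum_{a\in J(C)\cap J(D)}\chi_a$. Hence $e_C+e_D-e_C*e_D=\frac1n\sum_{a\in J(C)\cup J(D)}\chi_a$, and one checks that this idempotent generates $C+D$.

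You instead read everything off the bijection of Proposition~\ref{prop:spectrum}. Within the paper's framework this is more economical, since it needs only that the $\chi_a$ form a basis and no computation with idempotents. The idempotent route has the advantage of producing an explicit generator for $C+D$, and it gives directness of $\bigoplus_T M_T$ from the orthogonality of the $e_{M_T}$.

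The one step you flagged as delicate, comparing $\dim_{\F_q}$ with $\dim_F$, can be avoided. Suppose $\sum_T m_T=0$ with each $m_T\in M_T\subseteq\bigoplus_{a\in T}F\chi_a$. Then linear independence of the $\chi_a$ forces every $m_T=0$, which gives directness with no dimension count.
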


\section{Linking codes and schemes}\label{sec:link}

In this section, we link the two theories reviewed in Sections~\ref{sec:cyc} and~\ref{sec:codes}. The link is a single space, namely the space of all functions on $\Z_n$ that are constant on the $q$-cyclotomic cosets. Each of the two theories equips this space with a basis, and the two bases are of entirely different origins.
\begin{enumerate}[label=\textup{(\alph*)}]
\item On the scheme side, it is isomorphic to the scheme algebra $\F_q\Cyc(\gen q,\Z_n)$, and the colors of the scheme give a basis of indicator functions.
\item On the coding side, it is spanned by the idempotent generators of the minimal cyclic codes.
\end{enumerate}

Theorem~\ref{thm:A} compares the two bases. We will show that a permutation of $\Z_n$ commutes with convolution by every element of the space if and only if it preserves the colors of the scheme. This condition may then be verified on either basis. On the first basis it states that the permutation is an automorphism of the scheme, and on the second that it fixes every cyclic code.

\subsection{The space \texorpdfstring{$\cS_q(n)$}{Sq(n)} and its two bases}\label{ssec:twobases}

Choose $R:=\Z_n$ and $K:=\gen q\le\Z_n^\times$ in the sense of Section~\ref{sec:cyc}, so that $\Aff_q(n)=\Aff_{\gen q}(\Z_n)$. The colors of $\Cyc(\gen q,\Z_n)$ are precisely the $q$-cyclotomic cosets of Section~\ref{ssec:spec}, so the rank of the scheme is $|\Z_n/\gen q|$. Recall from Theorem~\ref{thm:A} the abbreviation $\Aut_q(n)=\Aut\big(\Cyc(\gen q,\Z_n)\big)$, which contains $\Aff_q(n)$ by Proposition~\ref{prop:cycscheme}.

\begin{definition}\label{def:S}
Let $\cS_q(n)$ be the set of all $f\in\F_q^\Omega$ that are constant on each cyclotomic coset, or equivalently the set of all $f$ with $f(qi)=f(i)$ for every $i\in\Omega$.
\end{definition}

The first basis of $\cS_q(n)$ comes from the cyclotomic association scheme side.

\begin{proposition}\label{prop:S}
The set $\cS_q(n)$ is the subalgebra of $\F_q^\Omega$ fixed by $\mu_q$, and
\[
\F_q\Cyc(\gen q,\Z_n)\ \xrightarrow{\ \sim\ }\ \cS_q(n),\qquad T\longmapsto\one_T,
\]
is an isomorphism of $\F_q$-algebras.
\end{proposition}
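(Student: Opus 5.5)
We need three things: $\cS_q(n)$ is exactly the fixed space of $\mu_q$, it is closed under convolution, and $T\mapsto\one_T$ carries the scheme product to convolution.

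\textbf{Fixed space.} By the action defined in Section~\ref{ssec:perms}, $(\mu_q f)(i)=f(q^{-1}i)$. So $\mu_q f=f$ says $f(q^{-1}i)=f(i)$ for all $i$. Substituting $i\mapsto qi$ turns this into $f(qi)=f(i)$, which is the definition of $\cS_q(n)$.

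\textbf{Subalgebra.} Since $\mu_q$ is an automorphism of the group $\Z_n$, its action on $\F_q^\Omega$ is an algebra automorphism for $\ast$, as noted in Section~\ref{ssec:perms}. The fixed points of an algebra automorphism always form a subalgebra. This subalgebra contains $\delta_0=\one_{\{0\}}$, because $\{0\}$ is a cyclotomic coset.

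\textbf{Bijection.} The cyclotomic cosets partition $\Omega$. Hence the indicators $\one_T$ for $T\in\Z_n/\gen q$ are linearly independent and span the functions constant on cosets, so they form an $\F_q$-basis of $\cS_q(n)$. The colors $T$ form a basis of $\F_q\Cyc(\gen q,\Z_n)$, so the $\F_q$-linear map $T\mapsto\one_T$ is a linear bijection. It also sends the identity color $\{0\}$ to the convolution identity $\delta_0$.

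\textbf{Multiplicativity.} This is the main step: I need $\one_S\ast\one_T=\sum_U c^U_{ST}\one_U$ in $\F_q^\Omega$, with the integer structure constants read mod the characteristic. For $x\in\Omega$,
\[
(\one_S\ast\one_T)(x)=\#\{y\in\Omega: x-y\in S,\ y\in T\}\bmod p .
\]
I will match this with the regularity axiom~\ref{ax:reg} for the colouring $\pi(x,y)=(y-x)K$. Take the base point $0$ and the point $x$, and suppose $x\in U$, so that $\pi(0,x)=U$. Then
\[
c^U_{rs}=\#\{y:\ y\in r,\ x-y\in s\}.
\]
With $r=T$ and $s=S$ this is exactly the count above. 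Hence $(\one_S\ast\one_T)(x)=c^U_{TS}$ whenever $x\in U$, that is, $\one_S\ast\one_T=\sum_U c^U_{TS}\one_U$.

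\textbf{Matching the order of factors.} The formula gives $c^U_{TS}$ rather than $c^U_{ST}$. Convolution on the abelian group $\Z_n$ is commutative, so $\one_S\ast\one_T=\one_T\ast\one_S=\sum_U c^U_{ST}\one_U$. Thus the map is multiplicative and is an isomorphism of $\F_q$-algebras. As a by-product, this also shows the scheme algebra is commutative, so it does not matter which order convention for the product one adopts.
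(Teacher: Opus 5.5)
Your proof is correct and follows the paper's argument. You identify $\cS_q(n)$ as the fixed algebra of $\mu_q$, send the coset basis to the indicator basis, and evaluate the convolution of indicators pointwise via Axiom~\ref{ax:reg} at the pair $(0,x)$. The only cosmetic difference is in that last step: the paper writes the count as $\#\{(u,v)\in T\times T':u+v=i\}$, which gives $c^{i\gen q}_{TT'}$ directly, whereas you obtain $c^U_{TS}$ and then swap the factors using commutativity of convolution; both routes are fine.
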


\begin{proof}
The identity $\mu_qf=f$ says $f(q^{-1}i)=f(i)$ for all $i$, that is, $f$ is constant on the orbits of $\gen q$. So $\cS_q(n)$ is the fixed set of $\mu_q$, and it is a subalgebra of $\F_q^\Omega$ because $\mu_q$ acts as an algebra automorphism.

The colors of $\Cyc(\gen q,\Z_n)$ are the cyclotomic cosets. The scheme algebra $\F_q\Cyc(\gen q,\Z_n)$ defined in Definition~\ref{def:schemealg} is the free $\F_q$-vector space on the cyclotomic cosets, of dimension $\rk\big(\Cyc(\gen q,\Z_n)\big)$. The map $T\mapsto\one_T$ sends that basis to the indicators, which form a basis of $\cS_q(n)$.

It remains to show that this vector space isomorphism is an algebra homomorphism. For every $T, T' \in \Z_n / \gen q$ and $i \in \Z_n$,
\[
(\one_T*\one_{T'})(i)=\#\{(u,v)\in T\times T':u+v=i\} = c^{\,i\gen q}_{TT'},
\]
where the second equality is obtained by applying Axiom~\ref{ax:reg} to the pair $(0,i)$, with $r = T$, $s = T'$ and $t = i\gen q$. Hence $\one_T*\one_{T'}=\sum_{T''}c^{T''}_{TT'}\one_{T''}$, which matches the multiplication rule of the scheme algebra.
\end{proof}

The second basis of $\cS_q(n)$ comes from the coding side.

\begin{lemma}\label{lem:eCinS}
For every cyclic code $C$, $e_C\in\cS_q(n)$. Moreover, the $e_{M_T}$'s for $T\in\Z_n/\gen q$ are the primitive idempotents of $\F_q^\Omega$ and form a basis of $\cS_q(n)$.
\end{lemma}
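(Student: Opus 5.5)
The plan is to combine Lemma~\ref{lem:idem} with the $\mu_q$-fixed-point description of $\cS_q(n)$ from Proposition~\ref{prop:S}.

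\emph{Step 1: $e_C\in\cS_q(n)$.} By Lemma~\ref{lem:idem}, $e_C$ lies in $\F_q^\Omega$. By Proposition~\ref{prop:S}, it therefore suffices to show $\mu_q e_C=e_C$. Property~\ref{ch:mu} gives
\[
\mu_q e_C=\frac1n\sum_{a\in J(C)}\chi_{q^{-1}a}.
\]
By Proposition~\ref{prop:spectrum}, $J(C)$ is $\gen q$-invariant, so $a\mapsto q^{-1}a$ permutes $J(C)$ and the sum is unchanged.

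\emph{Step 2: the $e_{M_T}$ are primitive idempotents of $\F_q^\Omega$.} By Lemma~\ref{lem:idem}, each $e_{M_T}$ is an idempotent of $\F_q^\Omega$. Since $J(M_T)=T$, the idempotents for distinct $T$ are orthogonal, because $\chi_a*\chi_b=0$ for $a\ne b$. Their sum is
\[
\frac1n\sum_{a\in\Omega}\chi_a=\delta_0,
\]
by (C2) and the fact that the cosets partition $\Omega$.

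Now suppose $e_{M_T}=e'+e''$ with $e',e''$ orthogonal nonzero idempotents of $\F_q^\Omega$. Then $e'*\F_q^\Omega$ is a nonzero cyclic code contained in $M_T=e_{M_T}*\F_q^\Omega$. Conversely $M_T\subseteq e'*\F_q^\Omega+e''*\F_q^\Omega$, and $e''*\F_q^\Omega$ is a nonzero cyclic code meeting $e'*\F_q^\Omega$ trivially, since if $x=e'*y=e''*z$ then $x=e'*x=e'*e''*z=0$. So $e'*\F_q^\Omega$ is a proper nonzero subcode of $M_T$. This contradicts Corollary~\ref{cor:count} and Proposition~\ref{prop:spectrum}: the spectrum of such a subcode would be a nonempty proper $\gen q$-invariant subset of $T$, which does not exist. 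Hence each $e_{M_T}$ is primitive.

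These are moreover all the primitive idempotents. The algebra is commutative, and $\delta_0=\sum_T e_{M_T}$. Any primitive idempotent $e$ therefore satisfies $e=\sum_T e*e_{M_T}$, and primitivity of each $e_{M_T}$ forces $e*e_{M_T}\in\{0,e_{M_T}\}$. It follows that $e$ equals a single $e_{M_T}$.

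\emph{Step 3: basis.} By Step~1, the $e_{M_T}$ lie in $\cS_q(n)$. They are pairwise orthogonal and nonzero: indeed $e_{M_T}\neq0$ because the $\chi_a$ are $F$-linearly independent by (C1). Pairwise orthogonal nonzero idempotents are linearly independent, since multiplying a vanishing linear combination by $e_{M_T}$ isolates its coefficient. There are $|\Z_n/\gen q|$ of them, which equals $\dim\cS_q(n)$ because the indicators $\one_T$ form a basis. So they form a basis.

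The only mildly delicate point is primitivity, which is handled by translating idempotent decompositions into subcodes and using the bijection of Proposition~\ref{prop:spectrum}. The other steps are direct.
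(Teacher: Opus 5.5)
Your proof is correct and follows the paper's argument. Like the paper, you get $\mu_q e_C=e_C$ from \ref{ch:mu} and the $\gen q$-invariance of $J(C)$, and you get the basis from the linear independence of the pairwise orthogonal nonzero idempotents $e_{M_T}$ together with the count $|\Z_n/\gen q|=\dim\cS_q(n)$. The only difference is that you spell out primitivity: you turn an orthogonal splitting of $e_{M_T}$ into a proper nonzero subcode of $M_T$, and you also check that no other primitive idempotents exist. The paper instead settles primitivity in one line by citing the minimality of the $M_T$ from Corollary~\ref{cor:count}.
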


\begin{proof}
By~\ref{ch:mu}, $\mu_qe_C=\frac1n\sum_{a\in J(C)}\chi_{q^{-1}a}=e_C$, since $q^{-1}J(C)=J(C)$, hence $e_C\in\cS_q(n)$ by Definition~\ref{def:S}. In particular, every $e_{M_T}$ lies in $\cS_q(n)$. The $e_{M_T}$'s are pairwise orthogonal nonzero idempotents summing to $\delta_0$, hence linearly independent. Moreover, there are $|\Z_n/\gen q|$ of them, which by Proposition~\ref{prop:S} is $\dim_{\F_q}\cS_q(n)$, so they form a basis of $\cS_q(n)$. They are primitive because the $M_T$ are the minimal cyclic codes, by Corollary~\ref{cor:count}.
\end{proof}

\subsection{Proof of Theorem~\ref{thm:A}}\label{ssec:proofA}

With both bases in hand, it remains to relate them. The following lemma describes the permutations that commute with convolution by invariant functions.

\begin{lemma}\label{lem:commute}
Let $\sigma\in\Sym(\Omega)$. Then, $\sigma \in \Aut_q(n)$ if and only if $\sigma(f*c)=f*\sigma(c)$ for all $f \in \cS_q(n)$ and $c \in \F_q^\Omega$.
\end{lemma}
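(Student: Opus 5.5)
Both conditions are linear in $f$ and $c$, so we reduce to basis elements. By Proposition~\ref{prop:S}, $\cS_q(n)$ is spanned by the indicators $\one_T$, $T\in\Z_n/\gen q$. The space $\F_q^\Omega$ is spanned by the $\delta_b$, $b\in\Omega$. The condition $\sigma(f*c)=f*\sigma(c)$ for all $f,c$ is therefore equivalent to
\[
\sigma(\one_T*\delta_b)=\one_T*\sigma(\delta_b)\qquad\text{for all }T\in\Z_n/\gen q,\ b\in\Omega.
\]
Compute both sides. We have $(\one_T*\delta_b)(x)=\one_T(x-b)$, so $\one_T*\delta_b=\one_{b+T}$. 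From the action $(\sigma c)(i)=c(\sigma^{-1}i)$ we get $\sigma\delta_b=\delta_{\sigma b}$ and $\sigma\one_S=\one_{\sigma S}$. The condition thus becomes the set equality
\[
\sigma(b+T)=\sigma(b)+T\qquad\text{for all }T,\ b.
\]
Equivalently, for all $x,b\in\Omega$ and every coset $T$: $x-b\in T$ if and only if $\sigma(x)-\sigma(b)\in T$. Here we use that $\sigma$ is a bijection, so that $y\in\sigma(b+T)$ iff $\sigma^{-1}y-b\in T$; setting $y=\sigma x$ gives the claim.

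Now compare with the scheme. Since $\pi(b,x)=(x-b)\gen q$ is exactly the cyclotomic coset containing $x-b$, the last condition says precisely that $\pi(\sigma b,\sigma x)=\pi(b,x)$ for all $b,x$. Indeed, the cosets partition $\Omega$, so preserving membership in every $T$ is the same as preserving the coset. This is the definition of $\sigma\in\Aut(\Cyc(\gen q,\Z_n))=\Aut_q(n)$. Both directions follow, since every step above is an equivalence.

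The main point needing care is the reduction to basis elements, which requires linearity of both sides in $f$ and in $c$. This holds because $\sigma$ acts linearly on $\F_q^\Omega$ and convolution is bilinear. Beyond that, the only risks are sign and inverse conventions in $\sigma\delta_b=\delta_{\sigma b}$ and in the translate $\one_T*\delta_b=\one_{b+T}$. Neither matters for the final equivalence, since the condition ranges over all $b$ and all $T$, and $T\mapsto -T$ permutes the cosets.
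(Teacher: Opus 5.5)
Your proof is correct and is essentially the paper's argument. The paper evaluates both sides at $\sigma i$ for general $f$ and $c$ to obtain the condition $f(\sigma i-\sigma j)=f(i-j)$, which is your set equality $\sigma(b+T)=\sigma(b)+T$ once you specialize to $f=\one_T$ and $c=\delta_b$; your bilinearity reduction is valid, and since your conventions $\one_T*\delta_b=\one_{b+T}$ and $\sigma\delta_b=\delta_{\sigma b}$ are in fact correct, the closing caveat is unnecessary.
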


\begin{proof}
Let $f,c\in\F_q^\Omega$. For $i\in\Omega$, evaluating at $\sigma i$, the function $\sigma(f*c)$ takes the value $\sum_jf(i-j)c(j)$, and $f*\sigma(c)$ takes the value $\sum_jf(\sigma i-j)c(\sigma^{-1}j) = \sum_jf(\sigma i-\sigma j)c(j)$. These agree for every $i$ and every $c$ if and only if
\begin{equation} \label{eq:comm}
f(\sigma i-\sigma j)=f(i-j)\qquad\text{for all }i,j\in\Omega.
\end{equation}

Suppose $\sigma\in\Aut_q(n)$, and let $f\in\cS_q(n)$. Then $f(i-j)$ depends only on the coset $(i-j)\gen q$, that is, on the color $\pi(j,i)$, which $\sigma$ preserves. So Eq.~\eqref{eq:comm} holds, and therefore $\sigma(f*c)=f*\sigma(c)$ for every $c$.

Conversely, suppose that $\sigma(f*c)=f*\sigma(c)$ for all $f\in\cS_q(n)$ and $c\in\F_q^\Omega$, so that Eq.~\eqref{eq:comm} holds for every $f\in\cS_q(n)$. Taking $f=\one_T$ gives $\one_T(\sigma i-\sigma j)=\one_T(i-j)$ for every $T\in\Z_n/\gen q$, that is, $(\sigma i-\sigma j)\gen q=(i-j)\gen q$. So $\sigma$ preserves every color and lies in $\Aut_q(n)$.
\end{proof}

Now, we are ready to prove Theorem~\ref{thm:A}.

\begin{proof}[Proof of Theorem~\ref{thm:A}]
Let $\sigma\in \Aut_q(n)$ and let $C$ be a cyclic code, with idempotent generator $e_C\in\F_q^\Omega$ as in Lemma~\ref{lem:idem}, so that $e_C*e_C=e_C$ and $C=e_C*\F_q^\Omega$. By Lemma~\ref{lem:eCinS}, $e_C\in\cS_q(n)$. So Lemma~\ref{lem:commute} gives $\sigma(e_C*c)=e_C*\sigma(c)$ for every $c\in\F_q^\Omega$. Since $\sigma$ is a bijection of $\F_q^\Omega$,
\[
\sigma C=\sigma\big(e_C*\F_q^\Omega\big)=e_C*\sigma\big(\F_q^\Omega\big)=e_C*\F_q^\Omega=C,
\]
hence $\sigma\in\PAut(C)$. This proves $\Aut_q(n)\le\bigcap_C\PAut(C)$.

Conversely, suppose that $\sigma\in\Sym(\Omega)$ fixes every minimal cyclic code, and fix $T\in\Z_n/\gen q$. Then, both $M_T$ and $M_T':=\sum_{T'\ne T}M_{T'}$ are $\sigma$-invariant, the latter because $\sigma$ acts linearly. By Lemma~\ref{lem:sumspec} and Lemma~\ref{lem:idem}, $\F_q^\Omega=M_T\oplus M_T'$ and convolution by $e_{M_T}$ is the projection onto $M_T$ along $M_T'$.

Let $c \in \F_q^\Omega$. By Lemma~\ref{lem:idem} the idempotent of $M_T'$ is $e_{M_T'}=\delta_0-e_{M_T}$, hence $c=e_{M_T}*c+e_{M_T'}*c$. Applying $\sigma$ puts $\sigma(e_{M_T}*c)$ in $M_T$ and $\sigma(e_{M_T'}*c)$ in $M_T'$. Since convolution by $e_{M_T}$ is the identity on $M_T$ and annihilates $M_T'$,
\[
e_{M_T}*\sigma(c)=e_{M_T}*\big(\sigma (e_{M_T}*c) + \sigma (e_{M_T'}* c)\big)=\sigma\big(e_{M_T}*c\big).
\]
Thus, $\sigma$ commutes with convolution by each $e_{M_T}$. The condition is linear in the convolving element, and by Lemma~\ref{lem:eCinS} the $e_{M_T}$'s form a basis of $\cS_q(n)$. So, $\sigma(f*c)=f*\sigma(c)$ for every $f\in\cS_q(n)$ and every $c$, hence $\sigma\in \Aut_q(n)$ by Lemma~\ref{lem:commute}. Therefore
\[
\Aut_q(n)\ \le\ \bigcap_C\PAut(C)\ \le\ \bigcap_{T\in\Z_n/\gen q}\PAut(M_T)\ \le\ \Aut_q(n),
\]
and the three groups coincide.
\end{proof}

\section{Exceptional lengths}\label{sec:examples}

This section gives an arithmetic characterization of exceptional lengths. Section~\ref{ssec:EP} reviews the structural characterization given by Evdokimov-Ponomarenko \cite{EvdokimovPonomarenko2003}. Section~\ref{ssec:proofB} proves Theorem~\ref{thm:B}. In each of Sections~\ref{ssec:famA}, \ref{ssec:famB} and~\ref{ssec:famC}, we give an infinite family of exceptional lengths, corresponding to the three conditions of Theorem~\ref{thm:B}. Finally, Section~\ref{ssec:prim} uses Theorem~\ref{thm:B} in the other direction, and shows that no length of the form $q^{m}-1$ is exceptional.

\subsection{The characterization of Evdokimov-Ponomarenko}\label{ssec:EP}

Proposition~\ref{prop:cycscheme} identifies $\Aff_K(R)$ as the affine part of $\Aut\big(\Cyc(K,R)\big)$, so the equality $\Aut\big(\Cyc(K,R)\big)=\Aff_K(R)$ holds if and only if $\Aut\big(\Cyc(K,R)\big)\le\AGL_1(R)$. That is, the automorphism group is as small as possible exactly when it consists of affine maps, and this is the condition that Evdokimov-Ponomarenko decide.

Two invariants are used in their structural characterization. The first is
\[
\Tr(n,q):=\{h\in\Z_n:\ \gen q+h=\gen q\}.
\]
It contains $0$ and is closed under addition, so it is a subgroup of $\Z_n$.

For the second, observe that a prime $p$ with $p\,\|\,n$ splits the unit group as $\Z_n^\times=\Z_p^\times\times\Z_{n/p}^\times$, by the Chinese remainder theorem. Let
\[
\Pr(n,q):=\{p\ \text{prime}:\ p\,\|\,n\ \text{and}\ \Z_p^\times\times\{1\}\le\gen q\}.
\]

\begin{theorem}[Evdokimov-Ponomarenko, {\cite[Thm.~6.1]{EvdokimovPonomarenko2003}}]\label{thm:EP}
Let $q$ be a prime power and $n\ge2$ with $\gcd(n,q)=1$. Then,
\[
\Aut_q(n)=\Aff_q(n)
\qquad\text{if and only if}\qquad
|\Tr(n,q)|\le2\ \text{ and }\ \Pr(n,q)\subseteq\{2,3\}.
\]
\end{theorem}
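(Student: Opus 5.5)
The plan is to treat Theorem~\ref{thm:EP} as a statement about Schur rings over $\Z_n$ and to prove the two directions separately. Necessity is elementary: we exhibit explicit non-affine automorphisms. Sufficiency is structural and rests on the Leung-Ma, Muzychuk and Leung-Man classification of Schur rings over cyclic groups recalled in Section~\ref{sec:intro}. Throughout we use Proposition~\ref{prop:cycscheme}: $\Aut_q(n)=\Aff_q(n)$ if and only if every automorphism of $\Cyc(\gen q,\Z_n)$ is affine.

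\emph{Necessity, first case.} Suppose $p\in\Pr(n,q)$ with $p\ge5$, and write $\Z_n=\Z_p\times\Z_{n/p}$ by the Chinese remainder theorem. Since $\Z_p^\times\times\{1\}\le\gen q$, we have $\gen q=\Z_p^\times\times K'$, where $K'$ is the projection of $\gen q$ to $\Z_{n/p}^\times$. The orbit of $(a,b)$ is then determined by two data: whether $a=0$, and the coset $bK'$. So $\Cyc(\gen q,\Z_n)$ is the tensor product of the rank-$2$ scheme on $\Z_p$ with $\Cyc(K',\Z_{n/p})$. Its automorphism group therefore contains $\Sym(\Z_p)\times\{1\}$. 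Since $p!>p(p-1)$ for $p\ge5$, this group contains a non-affine permutation.

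\emph{Necessity, second case.} Suppose $H:=\Tr(n,q)$ has order greater than $2$. Multiplying the relation $\gen q+H=\gen q$ by a unit $u$ shows that every orbit of units is a union of $H$-cosets. One then checks that the scheme is a generalized wreath product with respect to the pair $H\le U$, where $U$ is a suitable subgroup (the radical of the unit classes) with $H\le U$. For such a product, the permutation $\sigma$ that translates one chosen $U$-coset by $h\in H$ and fixes everything else preserves all colors. For $|H|>2$ and suitable $h$, this $\sigma$ is not affine. The case $|H|=2$ must fail here because the corresponding translation coincides with an affine map; this is where the bound $2$ enters.

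\emph{Sufficiency.} Assume $|\Tr(n,q)|\le2$ and $\Pr(n,q)\subseteq\{2,3\}$, and induct on $n$. By the classification, the cyclotomic Schur ring is one of three things: a tensor product over a coprime factorization $n=n_1n_2$; a nontrivial generalized wreath product; or a ring with trivial radical, which is then cyclotomic with affine automorphism group. In the wreath case, the existence of the section $H\le U$ forces $H\subseteq\Tr(n,q)$, and the hypothesis $|H|\le2$ then makes the wreath decomposition degenerate enough that the automorphism group stays affine. In the tensor case, the automorphism group is the product of the automorphism groups of the factors. The factors inherit both hypotheses, so induction applies to each, except for a rank-$2$ factor $\Z_p$. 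A rank-$2$ factor $\Z_p$ corresponds exactly to $p\in\Pr(n,q)$, and for $p\in\{2,3\}$ we have $\Sym(\Z_p)=\AGL_1(\Z_p)$, so it is harmless. One must still check that the product of affine groups of the factors equals $\Aff_q(n)$ and not just some group in $\AGL_1$. Proposition~\ref{prop:cycscheme} handles this, once the product is known to lie in $\AGL_1(\Z_n)$.

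I expect the wreath-product step to be the main obstacle. It needs a precise description of $\Aut$ of a generalized wreath product, showing it is generated by the automorphisms of the two sections together with the ``partial translations'' by $H$. It also needs the fact that cyclotomic rings with $|H|\le2$ admit no non-affine partial translations. My first approach would be to cite the Evdokimov-Ponomarenko description of automorphism groups of generalized wreath products of circulant S-rings rather than reprove it.
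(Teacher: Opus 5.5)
\textbf{Necessity.} Your first necessity case is correct. Since $\gen q=\Z_p^\times\times K'$, you get $\Sym(\Z_p)\times\{1\}\le\Aut_q(n)$, and only $p(p-1)<p!$ affine maps act in that way.

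The second case rests on a false claim. The radical of the unit classes is $\Tr(n,q)$ itself, because $u\Tr(n,q)=\Tr(n,q)$, so your parenthetical gives $U=H$. In general no proper $U$ makes $\Cyc(\gen q,\Z_n)$ a $U/\Tr(n,q)$-wreath product. For $n=36$ and $q=7$ one has $\gen 7=1+6\Z_{36}$, so $\Tr(36,7)=6\Z_{36}$. The orbits $\{2,14,26\}$ and $\{3,21\}$ are not unions of $6\Z_{36}$-cosets, so they must lie in $U$; this forces $2,3\in U$, i.e.\ $U=\Z_{36}$.

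The repair is to shrink $H$. Take $L\le\Tr(n,q)$ of odd prime order $p$, or of order $4$ if $|\Tr(n,q)|$ is a power of $2$, and set $U=p\Z_n$, resp.\ $U=2\Z_n$.
\begin{itemize}
\item Lemma~\ref{lem:stcrit} gives $p^2\mid n$, resp.\ $8\mid n$, so $L\le U$.
\item Each $x\notin U$ is $du$ with $u$ a unit and $\gcd(d,|L|)=1$. Hence $dL=uL=L$ and $x\gen q+L=x\gen q$.
\end{itemize}
Your partial translation is then color-preserving, but its non-affinity still needs proof; you only assert it.
\begin{itemize}
\item If $[\Z_n:U]=p\ge3$, the map fixes more than $n/2$ points. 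The fixed set of $x\mapsto kx+b$ is empty or a coset of a subgroup, and a coset with more than $n/2$ elements is all of $\Z_n$. So the map would be the identity, which it is not.
\item If $U=2\Z_n$, an affine map fixing $U$ pointwise is $x\mapsto kx$ with $k\in\{1,1+n/2\}$. Such a map moves odd points only by $0$ or $n/2$, so a shift by $h$ of order $4$ is not affine.
\end{itemize}

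\textbf{Sufficiency.} This half contains the real gap, and it is the whole difficulty.

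First, the trichotomy you quote is not right as stated. $\Cyc(\Z_p^\times,\Z_p)$ with $p\ge5$ has trivial radical, is not a tensor product, and has automorphism group $\Sym(\Z_p)$. The correct form is normality once rank-$2$ tensor factors are split off. That is not a formal consequence of the tensor/wreath decomposition, which says nothing about automorphism groups; it is the substantive normality theorem.

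Second, and more seriously, the wreath case cannot be dismissed, because it occurs under your hypotheses. For $n=8$ and $q=5$ one has $\Tr(8,5)=\{0,4\}$ and $\Pr(8,5)=\emptyset$. The scheme is a nontrivial $2\Z_8/\{0,4\}$-wreath product, yet $\Aut_5(8)=\Aff_5(8)$. So you must prove that every automorphism of such a product with $|L|=2$ is affine. That requires two things you do not supply:
\begin{itemize}
\item induction on both sections $U$ and $\Z_n/L$ (you checked inheritance of the hypotheses only for tensor factors);
\item a gluing step showing that a color-preserving permutation inducing affine maps on $\Z_n/L$ and on each $U$-coset is affine on $\Z_n$.
\end{itemize}
The gluing step is where $|L|\le2$ must enter, and ``degenerate enough'' does not provide it.

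If you cite Evdokimov--Ponomarenko for this step, your sufficiency direction becomes the same citation the paper makes. The paper does not prove Theorem~\ref{thm:EP}; it quotes \cite[Thm.~6.1]{EvdokimovPonomarenko2003}. What your proposal genuinely adds, once repaired as above, is a self-contained elementary proof of the necessity direction.
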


\subsection{Proof of Theorem~\ref{thm:B}}\label{ssec:proofB}

Theorem~\ref{thm:B} uses indices instead of the two invariants $\Tr(n,q)$ and $\Pr(n,q)$. The two lemmas below convert information on those invariants into equalities of the index $\idx_n(q)$, which is the index of $\gen q$ in $\Z_n^\times$. Both pass through the kernels of the reduction: for a divisor $f$ of $n$, let
\[
	K_{n, f} := \ker (\Z_n^\times \twoheadrightarrow \Z_f^\times),
\]
which is a subgroup of order $\varphi(n)/\varphi(f)$.

\begin{lemma}\label{lem:idxmono}
Let $f$ be a divisor of $n$. Then, $K_{n,f}\subseteq\gen q$ if and only if $\idx_f(q)=\idx_n(q)$.
\end{lemma}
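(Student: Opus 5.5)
The plan is a counting argument in the finite abelian group $\Z_n^\times$, based on the reduction map $\rho\colon\Z_n^\times\twoheadrightarrow\Z_f^\times$. Surjectivity of $\rho$ is standard: every unit modulo $f$ lifts to a unit modulo $n$, for instance by the Chinese remainder theorem applied prime by prime. It follows that $K_{n,f}=\ker\rho$ has order $\varphi(n)/\varphi(f)$. Write $H:=\gen q\le\Z_n^\times$. The image $\rho(H)$ is the subgroup of $\Z_f^\times$ generated by $q \bmod f$, so $|\rho(H)|=\ord_f(q)$, while $|H|=\ord_n(q)$.

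The first step is to apply the first isomorphism theorem to $\rho|_H$:
\[
\ord_f(q)=|\rho(H)|=\frac{|H|}{|H\cap K_{n,f}|}=\frac{\ord_n(q)}{|H\cap K_{n,f}|}.
\]
Dividing $\varphi(f)=\varphi(n)/|K_{n,f}|$ by this gives
\[
\idx_f(q)=\frac{\varphi(f)}{\ord_f(q)}=\frac{\varphi(n)}{\ord_n(q)}\cdot\frac{|H\cap K_{n,f}|}{|K_{n,f}|}=\idx_n(q)\cdot\frac{|H\cap K_{n,f}|}{|K_{n,f}|}.
\]

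The second step reads off the equivalence. We have $\idx_f(q)=\idx_n(q)$ if and only if $|H\cap K_{n,f}|=|K_{n,f}|$. Since $H\cap K_{n,f}\subseteq K_{n,f}$ are finite sets, this holds if and only if $K_{n,f}\subseteq H$. As a by-product, this shows $\idx_f(q)\le\idx_n(q)$ in general, with the ratio equal to $[K_{n,f}:H\cap K_{n,f}]$, which should be useful later.

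There is no real obstacle. The only points to state carefully are the surjectivity of $\rho$, which underlies the order formula for $K_{n,f}$, and the identification $|\rho(H)|=\ord_f(q)$. The latter holds because $\rho$ sends the generator $q$ of $H$ to $q \bmod f$.
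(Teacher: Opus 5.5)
Your proposal is correct and takes essentially the same approach as the paper. Both compute $|\gen q\cap K_{n,f}|=\ord_n(q)/\ord_f(q)$ from the image of $\gen q$ under reduction and obtain $[K_{n,f}:\gen q\cap K_{n,f}]=\idx_n(q)/\idx_f(q)$; you additionally spell out the surjectivity of the reduction map, which the paper takes for granted.
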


\begin{proof}
The reduction carries $\gen q$ onto the subgroup of $\Z_f^\times$ generated by $q$, which has order $\ord_f(q)$, so $\gen q\cap K_{n,f}$ has order $\ord_n(q)/\ord_f(q)$. Dividing the order of $K_{n,f}$ by that order gives
\[
\big[K_{n,f}:\gen q\cap K_{n,f}\big]=\frac{\varphi(n)/\varphi(f)}{\ord_n(q)/\ord_f(q)}=\frac{\idx_n(q)}{\idx_f(q)},
\]
from which the result follows.
\end{proof}

\begin{lemma}\label{lem:stcrit}
Let $d\mid n$ and let $U=(n/d)\Z_n$ be the subgroup of $\Z_n$ of order $d$. Then, $U\le\Tr(n,q)$ if and only if $\idx_{n/d}(q)=\idx_n(q)$ and every prime dividing $d$ also divides $n/d$.
\end{lemma}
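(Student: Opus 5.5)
The plan is to first reduce the condition $U\le\Tr(n,q)$ to a statement about a single coset, namely $1+U\subseteq\gen q$, and then identify $1+U$ with the kernel $K_{n,n/d}$ so that Lemma~\ref{lem:idxmono} applies.

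\emph{Step 1: reduction to one coset.} By definition, $U\le\Tr(n,q)$ means $\gen q+h=\gen q$ for every $h\in U$, i.e.\ $\gen q+U=\gen q$, so $\gen q$ is a union of cosets of $U$ in $\Z_n$. Since $1\in\gen q$, this forces $1+U\subseteq\gen q$. Conversely, suppose $1+U\subseteq\gen q$, and let $k\in\gen q$. Since $k$ is a unit and $U$ is a subgroup of $\Z_n$, we have $kU=U$. Hence $k+U=k(1+U)\subseteq k\gen q=\gen q$. So $U\le\Tr(n,q)$ if and only if $1+U\subseteq\gen q$.

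\emph{Step 2: describe $1+U$.} Since $U=(n/d)\Z_n$, the set $1+U$ is exactly $\{x\in\Z_n: x\equiv1 \pmod{n/d}\}$. Intersecting with $\Z_n^\times$ gives precisely $K_{n,n/d}$, so $K_{n,n/d}\subseteq 1+U$ always holds. Equality holds exactly when every element of $1+U$ is a unit. I claim this happens if and only if every prime $p\mid d$ divides $n/d$.
\begin{itemize}
\item If every such prime divides $n/d$, then $n$ and $n/d$ have the same prime divisors. Any $x\equiv1\pmod{n/d}$ is then coprime to every prime dividing $n$, so $x$ is a unit.
\item If some prime $p\mid d$ does not divide $n/d$, then $p\mid n$ and $p$ is coprime to $n/d$. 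The Chinese remainder theorem gives $x$ with $x\equiv1\pmod{n/d}$ and $x\equiv0\pmod p$. Such an $x$ lies in $1+U$ but is not a unit.
\end{itemize}

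\emph{Step 3: combine.} Suppose $1+U\subseteq\gen q$. Since $\gen q\subseteq\Z_n^\times$, every element of $1+U$ is a unit. By Step~2, every prime dividing $d$ divides $n/d$, and $1+U=K_{n,n/d}\subseteq\gen q$. Lemma~\ref{lem:idxmono} with $f=n/d$ then yields $\idx_{n/d}(q)=\idx_n(q)$. Conversely, assume the prime condition and the index equality. Step~2 gives $1+U=K_{n,n/d}$, and Lemma~\ref{lem:idxmono} gives $K_{n,n/d}\subseteq\gen q$. Hence $1+U\subseteq\gen q$, and Step~1 finishes the proof.

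The only point requiring care is Step~2, the CRT construction of a non-unit in $1+U$. It is what links the purely arithmetic prime-divisor condition to the translation-invariance condition. The rest is formal once Lemma~\ref{lem:idxmono} is available.
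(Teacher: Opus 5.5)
Your proof is correct and follows essentially the same route as the paper: reduce $U\le\Tr(n,q)$ to $1+U\subseteq\gen q$, show that $1+U=K_{n,n/d}$ exactly under the prime-divisor condition, and invoke Lemma~\ref{lem:idxmono}. The only difference is cosmetic. The paper detects $K_{n,n/d}=1+U$ by comparing $|K_{n,n/d}|=\varphi(n)/\varphi(n/d)$ with $d$, whereas you exhibit a non-unit in $1+U$ directly via the Chinese remainder theorem.
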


\begin{proof}
We use the abbreviation $K:=K_{n,n/d}$ for simplicity. The coset $1+U$ consists of the $d$ residues congruent to $1$ modulo $n/d$, and $K$ consists of those of them that are units. So $K\subseteq1+U$, and the two coincide precisely when $|K|=d$. Here $|K|=\varphi(n)/\varphi(n/d)$, which equals $d$ times the product of $1-1/p$ over the primes $p$ dividing $n$ but not $n/d$. Therefore, $K=1+U$ if and only if every prime dividing $d$ also divides $n/d$.

We first prove the sufficiency. By Lemma~\ref{lem:idxmono}, $K\subseteq\gen q$, hence $1 + U = K \subseteq\gen q$. Since $U$ is an ideal of $\Z_n$, we have $\gen q\,U\subseteq U$, so $\gen q+U\subseteq\gen q(1+\gen qU)\subseteq\gen q(1+U)\subseteq\gen q$, and equality holds as both sides have the same size. Therefore, $U \le \Tr(n,q)$.

Assume conversely that $U\le\Tr(n,q)$. Taking the element $1\in\gen q$ in the identity $\gen q+u=\gen q$ gives $1+U\subseteq\gen q\subseteq\Z_n^\times$, so every member of $1+U$ is a unit. Therefore $K=1+U$, which implies that every prime dividing $d$ also divides $n/d$. Moreover, since $K$ lies inside $\gen q$, Lemma~\ref{lem:idxmono} gives $\idx_{n/d}(q)=\idx_n(q)$.
\end{proof}

Now, we have all the ingredients to prove the arithmetic characterization.

\begin{proof}[Proof of Theorem~\ref{thm:B}]
The result is trivial for $n = 1$, and we assume that $n \geq 2$. By Theorem~\ref{thm:EP}, $\Aut_q(n)\ne\Aff_q(n)$ if and only if either $\Pr(n,q)$ contains a prime $p\ge5$ or $|\Tr(n,q)|>2$. We show that the first is equivalent to Condition~\ref{it:E1}, and that the second holds if and only if Condition~\ref{it:E2} or Condition~\ref{it:E3} does.

Let $p\,\|\,n$. The Chinese remainder theorem gives $K_{n,n/p}=\Z_p^\times\times\{1\}$, so $p\in\Pr(n,q)$ if and only if $K_{n,n/p}\subseteq\gen q$, which by Lemma~\ref{lem:idxmono} says that $\idx_{n/p}(q)=\idx_n(q)$. Restricting to $p\ge5$ identifies the fact that $\Pr(n,q)$ contains a prime $p\ge5$ with Condition~\ref{it:E1}.

Suppose now that Condition~\ref{it:E2} holds at the odd prime $p$. Applying Lemma~\ref{lem:stcrit} with $d = p$, we obtain a subgroup of order $p$ inside $\Tr(n,q)$, and $|\Tr(n,q)|\ge p>2$. Condition~\ref{it:E3} runs the same way with $d = 4$, for which Lemma~\ref{lem:stcrit} gives $|\Tr(n,q)|\ge 4>2$.

Conversely suppose $|\Tr(n,q)|>2$, and let $t:=|\Tr(n,q)|$. Being a subgroup of the cyclic group $\Z_n$, the group $\Tr(n,q)$ equals $(n/t)\Z_n$, and it contains $(n/d)\Z_n$ for every divisor $d$ of $t$. Let $d$ be an odd prime divisor of $t$ if one exists, and $d=4$ otherwise, which is available because a power of $2$ exceeding $2$ is divisible by $4$. Then, Lemma~\ref{lem:stcrit} gives the equality $\idx_{n/d}(q)=\idx_n(q)$, and the fact that every prime dividing $d$ also divides $n/d$. If $d$ is an odd prime, the latter reads $d\mid n/d$, that is, $d^{2}\mid n$, which is Condition~\ref{it:E2}; and if $d=4$, it reads $2\mid n/4$, that is, $8\mid n$, which is Condition~\ref{it:E3}.
\end{proof}

\subsection{A family from a primitive root}\label{ssec:famA}

Condition~\ref{it:E1} asks the index identity to hold at a prime $p\ge5$ with $p\,\|\,n$. In that case the identity has a simple characterization in terms of orders.

\begin{proposition}\label{prop:famA}
Let $p$ be a prime with $p\,\|\,n$ and let $m:=n/p$. Then,
\[
\idx_m(q)=\idx_n(q)\iff \ord_p(q)=p-1\ \text{ and }\ \gcd\big(p-1,\ \ord_m(q)\big)=1 .
\]
\end{proposition}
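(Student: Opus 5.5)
Since $p\,\|\,n$, we have $\gcd(p,m)=1$, so $\varphi(n)=(p-1)\varphi(m)$. By the Chinese remainder theorem, $\Z_n^\times\cong\Z_p^\times\times\Z_m^\times$, and $q$ maps to $(q\bmod p,\ q\bmod m)$. Hence $\ord_n(q)=\operatorname{lcm}\big(\ord_p(q),\ord_m(q)\big)$. The plan is to rewrite the index equality as an equality of orders and then read off the two arithmetic conditions.

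First, $\idx_m(q)=\idx_n(q)$ means $\varphi(m)/\ord_m(q)=(p-1)\varphi(m)/\ord_n(q)$, that is,
\[
\ord_n(q)=(p-1)\,\ord_m(q).
\]
Write $a:=\ord_p(q)$ and $b:=\ord_m(q)$. The condition becomes $\operatorname{lcm}(a,b)=(p-1)b$. Since $a\mid p-1$,
\[
\operatorname{lcm}(a,b)=\frac{ab}{\gcd(a,b)}\le ab\le (p-1)b .
\]
Equality throughout holds if and only if $\gcd(a,b)=1$ and $a=p-1$. These together are exactly $\ord_p(q)=p-1$ and $\gcd(p-1,\ord_m(q))=1$. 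Conversely, if both conditions hold, then $\operatorname{lcm}(p-1,b)=(p-1)b$, and the index equality follows.

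Alternatively, one could go through Lemma~\ref{lem:idxmono}: $\idx_m(q)=\idx_n(q)$ if and only if $\Z_p^\times\times\{1\}\subseteq\gen q$. This would require analyzing when the cyclic group $\gen q\le\Z_p^\times\times\Z_m^\times$ contains the first factor. The direct order computation above avoids that.

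No step is a real obstacle. The only points to check are the identity $\ord_n(q)=\operatorname{lcm}(\ord_p(q),\ord_m(q))$, which follows from the CRT isomorphism, and the chain of inequalities being tight in both factors simultaneously.
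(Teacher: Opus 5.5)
Your proof is correct and is essentially the paper's argument: you use $\varphi(n)=(p-1)\varphi(m)$ to reduce the index equality to $\ord_n(q)=(p-1)\ord_m(q)$, write $\ord_n(q)$ as an lcm via CRT, and use $\ord_p(q)\mid p-1$ to force both $\ord_p(q)=p-1$ and coprimality. The paper phrases the last step as $t/\gcd(t,u)=p-1$ rather than as a tight chain of inequalities, which is only a cosmetic difference.
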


\begin{proof}
Since $p\,\|\,n$, we have $\varphi(n)=(p-1)\varphi(m)$, so the identity $\idx_m(q)=\idx_n(q)$ says that $\ord_n(q)=(p-1)\ord_m(q)$. The Chinese remainder theorem gives $\ord_n(q)=\operatorname{lcm}\big(\ord_p(q),\ord_m(q)\big)$. Writing $t:=\ord_p(q)$ and $u:=\ord_m(q)$, the identity $\operatorname{lcm}(t,u)=(p-1)u$ reads $t/\gcd(t,u)=p-1$, and since $t$ divides $p-1$ this holds if and only if $t=p-1$ and $\gcd(t,u)=1$.
\end{proof}

\begin{example}\label{ex:famA}
Take $q=2$ and $p=5$, where $2$ is a primitive root modulo $5$. For odd $k\ge3$, let $m:=2^{k}-1$. Note that $\ord_m(2)=k$ is odd and hence coprime to $p-1=4$, and $5\nmid m$, since $5\mid2^{k}-1$ would force $4\mid k$. Setting $n:=5m$, so that $5\,\|\,n$, Proposition~\ref{prop:famA} gives $\idx_m(2)=\idx_n(2)$, which is Condition~\ref{it:E1} at $p=5$. So by Theorem~\ref{thm:B} every $n=5\,(2^{k}-1)$ with odd $k\ge3$ is exceptional for $q=2$.
\end{example}

\subsection{A family from a prime power}\label{ssec:famB}

Condition~\ref{it:E2} asks the index identity to hold at a prime $p$ with $p^{2}\mid n$. The simplest lengths of that kind are the prime powers themselves.

\begin{proposition}\label{prop:famB}
Let $p$ be an odd prime not dividing $q$ and suppose
\[
q^{\,\ord_p(q)}\not\equiv1\pmod{p^{2}}.
\]
Then, for $j \ge 1$,
\[
\idx_{p^{j}}(q)=\frac{p-1}{\ord_p(q)}.
\]
\end{proposition}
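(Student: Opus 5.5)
The plan is to compute $\ord_{p^j}(q)$ exactly and then divide it into $\varphi(p^j)=p^{j-1}(p-1)$. Write $t:=\ord_p(q)$. The claim is equivalent to
\[
\ord_{p^j}(q)=t\,p^{j-1}\qquad\text{for all } j\ge1.
\]
Given this, $\idx_{p^j}(q)=p^{j-1}(p-1)/(t p^{j-1})=(p-1)/t$, as stated. The case $j=1$ is the definition of $t$.

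The key step is a lifting-the-exponent computation. By hypothesis $q^{t}=1+p\,a$ with $p\nmid a$. I will show by induction on $k\ge0$ that
\[
q^{tp^{k}}=1+p^{k+1}a_k\qquad\text{with } p\nmid a_k.
\]
For the inductive step, raise $1+p^{k+1}a_k$ to the $p$-th power with the binomial theorem. This gives $1+p^{k+2}a_k+\binom p2p^{2k+2}a_k^2+\cdots$. Since $p$ is odd, $p\mid\binom p2$, so every term after the linear one is divisible by $p^{k+3}$. Hence $a_{k+1}\equiv a_k\not\equiv0\pmod p$. The oddness of $p$ is used exactly here: for $p=2$ and $k=0$ the quadratic term $\binom22 2^{2}a_0^2$ is only divisible by $2^{2}$, and the induction breaks. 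I expect this to be the only delicate point.

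Next, determine the order. The reduction $\Z_{p^j}^\times\to\Z_p^\times$ shows that $t\mid\ord_{p^j}(q)$. Its kernel has order $p^{j-1}$, and $q^{t}$ lies in it, so $\ord_{p^j}(q)=t\,p^{s}$ for some $s\le j-1$. By the displayed congruence, $q^{tp^{s}}\equiv1\pmod{p^j}$ holds exactly when $s+1\ge j$. Therefore $s=j-1$, which gives the order formula and completes the proof.
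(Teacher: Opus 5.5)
Your proof is correct and follows the same route as the paper. Both reduce the claim to $\ord_{p^j}(q)=p^{j-1}\ord_p(q)$ and then divide into $\varphi(p^j)=p^{j-1}(p-1)$. The only difference is that the paper gets the order formula by noting $\ord_{p^2}(q)=p\,\ord_p(q)$ and citing the standard structure of $\Z_{p^j}^\times$ for odd $p$, whereas your binomial lifting induction proves that standard fact directly.
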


\begin{proof}
Let $t:=\ord_p(q)$. The hypothesis says that $q^{t}\ne1$ in $\Z_{p^{2}}^\times$, so the order of $q$ modulo $p^{2}$ is $pt$, and the standard description of $\Z_{p^{j}}^\times$ for odd $p$ then gives $\ord_{p^{j}}(q)=p^{\,j-1}t$ for every $j\ge1$. Dividing $\varphi(p^{j})=p^{\,j-1}(p-1)$ by this order gives $\idx_{p^{j}}(q)=(p-1)/\ord_p(q)$.
\end{proof}

\begin{example}\label{ex:famB}
For $q=2$ the hypothesis of Proposition~\ref{prop:famB} holds at $p=3$, where $\ord_3(2)=2$ and $2^{2}=4\not\equiv1\pmod 9$; at $p=5$, where $\ord_5(2)=4$ and $2^{4}=16\not\equiv1\pmod{25}$; and at $p=7$, where $\ord_7(2)=3$ and $2^{3}=8\not\equiv1\pmod{49}$. For $n=p^{k}$ with $k\ge2$ the proposition gives $\idx_{n/p}(2)=\idx_n(2)$, which together with $p^{2}\mid n$ is Condition~\ref{it:E2}. So by Theorem~\ref{thm:B}, the lengths $3^{k}$, $5^{k}$ and $7^{k}$ with $k\ge2$ are all exceptional for $q=2$.
\end{example}

\subsection{A family from a power of two}\label{ssec:famC}

Only Condition~\ref{it:E3} can make a power of $2$ exceptional, since the other two ask for an odd prime divisor of $n$. At $p=2$ the index $\idx_{2^{j}}(q)$ is eventually constant in $j$, as Proposition~\ref{prop:famC} below makes precise. For a length $n=2^{k}$, Condition~\ref{it:E3} asks that it have become constant by the time $j$ reaches $k-2$.

The exponent at which the index becomes constant is governed by a single invariant of $q$. For an odd prime power $q$, let $e:=\ord_4(q)$ and let
\[
s(q):=v_2\big(q^{\,e}-1\big) = \begin{cases}
v_2(q-1), & q\equiv1\pmod4, \\
v_2(q^2-1), & q\equiv3\pmod4.
\end{cases}
\]
In either case $s(q)\ge2$. Equivalently, $s(q)$ is the largest $j\ge2$ with $\ord_{2^{j}}(q)=\ord_4(q)$.

\begin{proposition}\label{prop:famC}
Let $q$ be an odd prime power, and let $e:=\ord_4(q)$ and $s:=s(q)$. Then, for $j \ge 2$,
\[
\idx_{2^{j}}(q)=\frac{2^{\,\min(j,\,s)-1}}{e}.
\]
\end{proposition}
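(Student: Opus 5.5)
Since $\varphi(2^{j})=2^{j-1}$, the claim is equivalent to
\[
\ord_{2^{j}}(q)=e\cdot 2^{\,\max(j-s,\,0)}\qquad (j\ge2),
\]
because then $\idx_{2^j}(q)=2^{j-1}/(e\,2^{\max(j-s,0)})=2^{\min(j,s)-1}/e$. So the plan is to compute $\ord_{2^j}(q)$ directly by the lifting-the-exponent lemma at $p=2$.

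First, $e=\ord_4(q)\in\{1,2\}$ and $e\mid \ord_{2^j}(q)$ for $j\ge2$, since reduction $\Z_{2^j}^\times\to\Z_4^\times$ is a homomorphism. Write $Q:=q^{e}$, so $Q\equiv1\pmod 4$ and $v_2(Q-1)=s\ge2$ by definition of $s(q)$. We will show that $\ord_{2^j}(q)=e\cdot\ord_{2^j}(Q)$. Since $\ord_{2^j}(q)$ is a multiple of $e$, say $e\cdot r$, we have $q^{er}=Q^{r}$. So $\ord_{2^j}(q)=e\cdot\ord_{2^j}(Q)$ provided that no exponent $m$ not divisible by $e$ satisfies $q^m\equiv1$. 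That holds because such $m$ already fail modulo $4$.

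Next, compute $\ord_{2^j}(Q)$. The order is a power of $2$ since $\Z_{2^j}^\times$ is a $2$-group. For $Q\equiv1\pmod4$, the standard identity $Q^{2}-1=(Q-1)(Q+1)$ with $v_2(Q+1)=1$ gives by induction
\[
v_2\big(Q^{2^{k}}-1\big)=s+k\qquad (k\ge0).
\]
Hence $Q^{2^k}\equiv1\pmod{2^j}$ if and only if $s+k\ge j$, so $\ord_{2^j}(Q)=2^{\max(j-s,0)}$. Combining with the previous paragraph yields the displayed formula for $\ord_{2^j}(q)$, and dividing $\varphi(2^j)$ by it finishes the proof. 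As a consistency check, this also confirms the remark that $s$ is the largest $j\ge2$ with $\ord_{2^j}(q)=e$.

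The only step needing care is the reduction $\ord_{2^j}(q)=e\,\ord_{2^j}(Q)$ when $e=2$. The argument is that $\ord_{2^j}(q)$ is even, so writing it as $2r$, $r$ is the least positive integer with $Q^{r}\equiv1$. Everything else is routine $2$-adic valuation bookkeeping.
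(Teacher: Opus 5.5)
Your proof is correct and essentially the same as the paper's. Both obtain $v_2\big(q^{e2^{i}}-1\big)=s+i$ from the factorization $(q^{e}-1)\prod_{t}(q^{e2^{t}}+1)$ (you by induction on $Q=q^{e}$), deduce $\ord_{2^{j}}(q)=e\,2^{\max(0,\,j-s)}$, and divide this into $\varphi(2^{j})=2^{j-1}$. The only difference is cosmetic: you pass explicitly through $\ord_{2^{j}}(Q)$, whereas the paper notes directly that the order is $e$ times a power of $2$.
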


\begin{proof}
Since $2^{s}$ exactly divides $q^{e}-1$ and $s\ge2$, the integer $q^{e}+1$ is congruent to $2$ modulo $4$, and for $t\ge1$ the integer $q^{\,e2^{t}}+1$ is an odd square plus one, hence congruent to $2$ modulo $8$. Factoring
\[
q^{\,e2^{i}}-1=\big(q^{e}-1\big)\prod_{t=0}^{i-1}\big(q^{\,e2^{t}}+1\big)
\]
and adding the valuations therefore gives $v_2\big(q^{\,e2^{i}}-1\big)=s+i$ for every $i\ge0$. Now let $j\ge2$. The order $\ord_{2^{j}}(q)$ divides $\varphi(2^{j})=2^{\,j-1}$ and is a multiple of $e$, so it equals $e2^{i}$ for some $i\ge0$, and it is the least such value for which $2^{j}$ divides $q^{\,e2^{i}}-1$, that is, for which $j\le s+i$. Hence for $j\ge2$,
\[
\ord_{2^{j}}(q)=e\,2^{\max(0,\,j-s)}.
\]
Dividing $\varphi(2^{j})=2^{\,j-1}$ by this order gives $\idx_{2^{j}}(q)=2^{\,\min(j,\,s)-1}/e$.
\end{proof}

\begin{example}\label{ex:famC}
For $n=2^{k}$ Conditions~\ref{it:E1} and~\ref{it:E2} are vacuous, so such a length is exceptional exactly when Condition~\ref{it:E3} holds, that is, when $k\ge3$ and $\idx_{2^{k-2}}(q)=\idx_{2^{k}}(q)$. For $k\ge4$ both exponents are at least $2$, so Proposition~\ref{prop:famC} turns that identity into $\min(k-2,s(q))=\min(k,s(q))$, which holds precisely when $k\ge s(q)+2$; and $k=3$ is excluded, since $\idx_{2}(q)=1$ while $\idx_{8}(q)=4/\ord_8(q)\ge2$ because $\Z_8^\times$ has exponent $2$. So $n=2^{k}$ is exceptional exactly when $k\ge s(q)+2$. Take $q=5$, so that $q\equiv1\pmod 4$ and $s(5)=v_2(4)=2$; then every $n=2^{k}$ with $k\ge4$ is exceptional for $q=5$.
\end{example}

\subsection{Lengths of the form \texorpdfstring{$q^{m}-1$}{q^m-1}}\label{ssec:prim}

Every condition of Theorem~\ref{thm:B} forces the order of $q$ to drop by a factor of at least three when a divisor is removed from the length. Modulo $q^{m}-1$, the order of $q$ is $m$, the smallest that a length of this size permits. All three conditions therefore fail for the length $n=q^{m}-1$.

\begin{proposition}\label{prop:prim}
For every $m\ge1$, the length $n=q^{m}-1$ is not exceptional for $q$.
\end{proposition}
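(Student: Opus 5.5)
We will use Theorem~\ref{thm:B} and show that none of Conditions~\ref{it:E1}, \ref{it:E2}, \ref{it:E3} can hold for $n=q^{m}-1$. The idea is to compare orders. Suppose $f\mid n$ with $n/f\in\{p,4\}$ as in the three conditions. By Lemma~\ref{lem:idxmono}, $\idx_f(q)=\idx_n(q)$ is equivalent to $K_{n,f}\subseteq\gen q$. Equivalently, it says that $\ord_n(q)/\ord_f(q)=\varphi(n)/\varphi(f)=|K_{n,f}|$.

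First we note that $\ord_n(q)=m$ for $n=q^{m}-1$. Indeed, $q^{m}\equiv1\pmod n$, and $q^{i}-1<n$ for $0<i<m$. Put $u:=\ord_f(q)$. Then $f\mid q^{u}-1$, and since $n\le f\cdot(n/f)$ we get $q^{m}-1\le (n/f)(q^{u}-1)$. The index equality forces $m=u\cdot|K_{n,f}|$. We then read off $|K_{n,f}|$ in each case:
\begin{itemize}
\item in \ref{it:E1}, $|K_{n,f}|=p-1\ge4$;
\item in \ref{it:E2}, $|K_{n,f}|=p\ge3$;
\item in \ref{it:E3}, $|K_{n,f}|=4$, because $8\mid n$ means $2\mid f$.
\end{itemize}
In each case write $r:=|K_{n,f}|$ and $c:=n/f$, so that $m=ru$ and $q^{ru}-1\le c(q^{u}-1)$.

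Now we derive a contradiction from size. Dividing gives $1+q^{u}+\dots+q^{(r-1)u}\le c$. The left side is at least $1+q+\dots+q^{r-1}\ge 2^{r}-1$.
\begin{itemize}
\item In \ref{it:E1}, $c=p$ and $r=p-1$, so $2^{p-1}-1\le p$, which is false for $p\ge5$.
\item In \ref{it:E2}, $c=p$ and $r=p$, so $2^{p}-1\le p$, which is false for $p\ge3$.
\item In \ref{it:E3}, $c=4$ and $r=4$, so $15\le4$, which is false.
\end{itemize}
Thus no condition holds, and Theorem~\ref{thm:B} gives $\Aut_q(n)=\Aff_q(n)$. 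The case $m=1$ with $n\le 1$ is trivial, and $q=2$, $m=1$ gives $n=1$.

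The main point needing care is the computation of $|K_{n,f}|$ from the divisibility hypotheses: $p\,\|\,n$ gives $\varphi(n)/\varphi(n/p)=p-1$, while $p^2\mid n$ and $8\mid n$ give the full factors $p$ and $4$. We also need the inequality $f\mid q^{u}-1\Rightarrow n\le c(q^{u}-1)$, which is immediate since $n=cf$. The rest is routine.
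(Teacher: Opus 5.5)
Your proof is correct and follows the paper's argument: $\ord_n(q)=m$, each condition forces $m=\lambda\,\ord_{n/d}(q)$ with $\lambda=\varphi(n)/\varphi(n/d)\ge 3$, and the inequality $q^{\lambda t}-1\le d\,(q^{t}-1)$ then fails for size reasons. The only cosmetic difference is that you reduce to a geometric sum and check the three cases separately, whereas the paper handles all three at once via the bound $d\le\lambda+1$ and the inequality $2^{\lambda-1}\ge\lambda+1$.
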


\begin{proof}
Suppose that $n=q^{m}-1$ is exceptional. Since $q^{m}\equiv1\pmod n$, and since $0<q^{k}-1<n$ whenever $1\le k<m$, we have $\ord_n(q)=m$. By Theorem~\ref{thm:B} one of the three conditions holds, and each of them has the form $\idx_{n/d}(q)=\idx_n(q)$ for a divisor $d$ of $n$. Equivalently, $m = \lambda t$, where $t := \ord_{n/d}(q)$ and $\lambda := \frac{\varphi(n)}{\varphi(n/d)}$.

In Condition~\ref{it:E1}, $(d,\lambda)=(p,p-1)$ with $p\ge5$. In Condition~\ref{it:E2}, $(d,\lambda)=(p,p)$ with $p\ge3$. In Condition~\ref{it:E3}, $(d,\lambda)=(4,4)$. In all three cases, $\lambda \ge 3$ and $d \le \lambda + 1$. Since $\ord_{n/d}(q)=t$, the divisor $n/d$ divides $q^{t}-1$. Then,
\[
q^{\lambda t}-1=n\le d\,\big(q^{t}-1\big)<(\lambda+1)\,q^{t}-1.
\]
Adding $1$ and dividing by $q^{t}$ gives $2^{\lambda - 1} \leq q^{(\lambda - 1) t} < \lambda+1$. This is a contradiction, since $2^{\lambda-1}\ge\lambda+1$ for every $\lambda\ge3$.
\end{proof}

\section{The density of exceptional lengths}\label{sec:density}

Section~\ref{sec:examples} produces infinitely many exceptional lengths. In this section we show that they are nonetheless rare, and prove Theorem~\ref{thm:C}.

The three conditions of Theorem~\ref{thm:B} are handled in two groups, according to how the prime they name divides $n$. Condition~\ref{it:E1} names a prime dividing $n$ exactly once, while Conditions~\ref{it:E2} and~\ref{it:E3} name one dividing $n$ at least twice. Accordingly, let
\begin{align*}
\cP(q)&:=\{n \in \N :\ \gcd(n,q)=1\ \text{and $n$ satisfies Condition~\ref{it:E1}}\},\\
\cQ(q)&:=\{n \in \N :\ \gcd(n,q)=1\ \text{and $n$ satisfies Condition~\ref{it:E2} or~\ref{it:E3}}\},
\end{align*}
where the letters stand for {\em primitive root} and for {\em square}. By Theorem~\ref{thm:B}, a length coprime to $q$ is exceptional if and only if it lies in $\cP(q)\cup\cQ(q)$. So Theorem~\ref{thm:C} is equivalent to the following two statements.

\begin{proposition}\label{prop:densP}
The set $\cP(q)$ has density $0$.
\end{proposition}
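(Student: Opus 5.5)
The plan is to reduce Condition~\ref{it:E1} to a parity condition and then use a sieve. By Proposition~\ref{prop:famA}, if $n\in\cP(q)$ then there is a prime $p\ge5$ with $p\,\|\,n$ such that $\ord_p(q)=p-1$ and $\gcd\big(p-1,\ord_{n/p}(q)\big)=1$. Since $p-1$ is even, this forces $\ord_{m}(q)$ to be odd for $m:=n/p$. Oddness of an order is fragile: if some prime $r\mid m$ has $\ord_r(q)$ even, then $\ord_r(q)\mid\ord_m(q)$, so $\ord_m(q)$ is even. Define
\[
\mathcal R(q):=\{r\ \text{prime}:\ r\nmid q,\ \ord_r(q)\ \text{even}\}.
\]
Suppose $n$ has at least two distinct prime factors in $\mathcal R(q)$. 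Then for every candidate $p\,\|\,n$, at least one of them, say $r\neq p$, divides $m=n/p$. Hence $\ord_m(q)$ is even and Condition~\ref{it:E1} fails. It follows that $\cP(q)$ is contained in the set of $n$ with at most one prime factor from $\mathcal R(q)$.

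\textbf{Step 1 (main obstacle).} Show that $\sum_{r\in\mathcal R(q)}1/r=\infty$. It suffices to show that $\mathcal R(q)$ has positive lower density among primes. Write $q=b^{2^{a}}$ with $a\ge0$ maximal, so that $b>1$ is not a perfect square. Consider primes $r\equiv1+2^{a+1}\pmod{2^{a+2}}$, that is, primes with $v_2(r-1)=a+1$, for which in addition $b$ is a quadratic non-residue mod $r$.

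For such $r$, the order $\ord_r(b)$ has full $2$-part $2^{a+1}$. Therefore $\ord_r(q)=\ord_r(b)/\gcd\big(\ord_r(b),2^{a}\big)$ is even.

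The set of these primes is described by a Chebotarev condition in the compositum $\mathbb Q(\zeta_{2^{a+2}},\sqrt b)$. The condition is a congruence mod $2^{a+2}$ together with a prescribed Frobenius on $\sqrt b$. When $\sqrt b\notin\mathbb Q(\zeta_{2^{a+2}})$ the two conditions are independent, and Chebotarev gives positive density.

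The delicate point is the case $\sqrt b\in\mathbb Q(\zeta_{2^{a+2}})$, i.e.\ $b\in\{2,\,2\cdot\square,\dots\}$ up to squares with small square-free part. There I would handle the finitely many square classes by hand, choosing a different residue class mod $2^{a+3}$ or $8$. Alternatively, I would simply quote Hasse's theorem that primes with $\ord_r(q)$ even have positive density. An elementary fallback would also suffice: it is enough that $\sum 1/r$ diverges, and one could try Dirichlet's theorem on a suitable class where $q$ is provably a non-$2$-power residue by reciprocity.

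\textbf{Step 2 (sieve).} Show that the integers $n\le x$ with at most one prime factor in $\mathcal R(q)$ number $o(x)$. Set $L(x):=\sum_{r\le x,\,r\in\mathcal R(q)}1/r\to\infty$. By the standard sieve (Brun, or a Hall–Tenenbaum type upper bound for multiplicative functions), the count of $n\le x$ with no prime factor in $\mathcal R(q)$ is $\ll x\,e^{-L(x)}$. The count with exactly one such factor $r$ (allowing $r^k\| n$) is $\ll x\,(1+L(x))e^{-L(x)}$; this follows by summing the previous bound over $n/r^k$, or by Turán–Kubilius applied to the additive function $\omega_{\mathcal R}(n)$, which has mean and variance $\sim L(x)$. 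Both are $o(x)$.

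Combining with the reduction above gives $\#\{n\le x: n\in\cP(q)\}=o(x)$, proving Proposition~\ref{prop:densP}.
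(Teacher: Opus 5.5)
Your proposal is correct and is essentially the paper's proof. It uses the same reduction through Proposition~\ref{prop:famA} to integers having at most one prime factor $\ell$ with $\ord_\ell(q)$ even. It then needs $\sum 1/\ell=\infty$ over such primes, and finishes with a sieve.

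For the divergence step the paper cites Wiertelak's generalization of Hasse's theorem, so your fallback citation is the right choice. Your Chebotarev sketch actually breaks for $q=2^e$ with $4\mid e$: there your residue-class set is empty, and what is required is a quartic-residue condition, not merely another residue class.

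The only other difference is in the sieve. Instead of Hall--Tenenbaum or Tur\'an--Kubilius, Lemma~\ref{lem:key} fixes $\ell_1,\dots,\ell_k$ and uses only periodicity to bound the upper density of integers divisible by at most one of them by $e^{-S_k}(1+2S_k)$, where $S_k=\sum_{i\le k}1/\ell_i$. Letting $k\to\infty$ then needs nothing beyond $\sum 1/\ell=\infty$.
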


\begin{proposition}\label{prop:densQ}
The set $\cQ(q)$ has density $0$.
\end{proposition}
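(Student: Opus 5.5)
The plan is a truncation argument: large primes $p$ are cheap because $p^2\mid n$ is already rare, and for each of the finitely many remaining small primes we show that the index condition forces $n$ to avoid a set of primes whose reciprocal sum diverges.

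\textbf{Step 1: reduction to finitely many primes.} Fix $\varepsilon>0$ and choose $P$ with $\sum_{p>P}p^{-2}<\varepsilon$. The lengths satisfying Condition~\ref{it:E2} at some prime $p>P$ are all divisible by $p^2$ for such a $p$, so they have upper density less than $\varepsilon$. It therefore suffices to show, for each fixed odd prime $p\le P$, that the set $\cQ_p$ of $n$ satisfying Condition~\ref{it:E2} at $p$ has density zero. The same argument will cover the set $\cQ_2$ of $n$ satisfying Condition~\ref{it:E3}, with $p=2$ and $d=4$.

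\textbf{Step 2: turning the index condition into a valuation inequality.} Fix $p$ and write $n=p^a m$ with $p\nmid m$. By Lemma~\ref{lem:idxmono}, the condition $\idx_{n/d}(q)=\idx_n(q)$ says $K_{n,n/d}\subseteq\gen q$. Under the Chinese remainder splitting $\Z_n^\times=\Z_{p^a}^\times\times\Z_m^\times$, the group $K_{n,n/d}$ is a nontrivial $p$-group lying in $\Z_{p^a}^\times\times\{1\}$. Since $\gen q$ is cyclic, this containment forces the element of order $p$ in $\gen q$, namely $q^{\ord_n(q)/p}$, to be $\equiv1\pmod m$. Hence $\ord_m(q)$ divides $\ord_n(q)/p$. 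As $\ord_n(q)=\operatorname{lcm}(\ord_{p^a}(q),\ord_m(q))$, this gives
\[
v_p\big(\ord_m(q)\big)<v_p\big(\ord_{p^a}(q)\big)\le a-1+v_p(p-1)\le a+c_p,
\]
where $c_p$ is a constant (for $p=2$ we use $v_2(\varphi(2^a))=a-1$).

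\textbf{Step 3: the density estimate.} Choose $A$ with $p^{-A}<\varepsilon/P$. The $n$ with $p^A\mid n$ contribute less than $\varepsilon/P$, so assume $a<A$ and set $B:=A+c_p$. By Step 2, every $n\in\cQ_p$ with $a<A$ has no prime factor $\ell\nmid pq$ with $p^B\mid\ord_\ell(q)$. Let $\mathcal L_B$ be the set of such primes $\ell$. I would show that $\mathcal L_B$ has positive natural density among primes. Write $q=r^{p^k}$ with $r$ not a $p$-th power; here $k$ is bounded since $q>1$. A prime $\ell$ that splits completely in $\Q(\zeta_{p^{B+k}})$ but not in $\Q(\zeta_{p^{B+k}},r^{1/p})$ satisfies $p^{B+k}\mid\ell-1$ and $r$ is not a $p$-th power mod $\ell$. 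Then $p^{B+k}\mid\ord_\ell(r)$, so $p^B\mid\ord_\ell(q)$. By Chebotarev, and since Kummer theory makes this extension nontrivial, these primes have positive density. For $p=2$ one must handle the possible degeneracy $\sqrt r\in\Q(\zeta_{2^{B+k}})$, which can be done by enlarging the cyclotomic level slightly. Consequently $\sum_{\ell\in\mathcal L_B}1/\ell=\infty$. A standard sieve bound then shows that the integers $n\le x$ free of prime factors from $\mathcal L_B$ number $\ll x\prod_{\ell\in\mathcal L_B,\ \ell\le x}(1-1/\ell)=o(x)$. One can use, for instance, the Brun or Selberg upper-bound sieve, or Hall–Tenenbaum.

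Combining the three steps, $\cQ(q)$ has upper density at most a constant multiple of $\varepsilon$ for every $\varepsilon>0$, hence density zero. I expect the main obstacle to be Step 3, specifically establishing the positive density of $\mathcal L_B$ uniformly enough, including the Kummer degeneracies when $q$ is a perfect power and when $p=2$. The sieve bound itself is routine once the divergence $\sum_{\ell\in\mathcal L_B}1/\ell=\infty$ is in hand.
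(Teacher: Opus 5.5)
Your proof is correct and has the same architecture as the paper's. You discard the $n$ divisible by a large prime power $p^a$ with $a\ge2$; the paper does this in one step with the threshold $p^a>y$, while you do it first in $p$ and then in $a$. Then, for each of the finitely many remaining $(p,a)$, you show that $n$ has no prime factor $\ell$ with a fixed power of $p$ dividing $\ord_\ell(q)$. The differences lie in how the two inputs are obtained.

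\begin{itemize}
\item \textbf{Valuation bound.} The paper's Lemma~\ref{lem:local} compares the $p$-parts of the two lcm's in $\ord_n(q)=p^c\,\ord_{n/p^c}(q)$ and gets $v_p(\ord_m(q))\le a-1-c$. You instead use that the nontrivial $p$-group $K_{n,n/d}\subseteq\gen q$ must contain the order-$p$ subgroup of the cyclic group $\gen q$. This gives $v_p(\ord_m(q))<v_p(\ord_{p^a}(q))\le a-1$, which is equally sufficient.
\item \textbf{Positive density.} For the positive density of $\{\ell:\ p^B\mid\ord_\ell(q)\}$, the paper simply cites Wiertelak (Theorem~\ref{thm:orddens}). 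Your Chebotarev--Kummer argument proves the prime-power case directly, which makes the proof self-contained modulo Chebotarev.
\item \textbf{Final counting step.} No upper-bound sieve is needed. For any finitely many $\ell_1,\dots,\ell_k$ in your set, the integers divisible by none of them have density exactly $\prod_{i\le k}(1-1/\ell_i)$ by periodicity. This product already tends to $0$ because $\sum 1/\ell_i$ diverges; this is the argument of Lemma~\ref{lem:key}.
\end{itemize}

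One correction: the $p=2$ Kummer degeneracy you worry about cannot arise. The conditions $8\mid n$ and $\gcd(n,q)=1$ force $q$ to be odd, so $r$ is an odd power of an odd prime. That prime ramifies in $\mathbb{Q}(\sqrt r)$ but not in $\mathbb{Q}(\zeta_{2^N})$, so $\sqrt r\notin\mathbb{Q}(\zeta_{2^N})$. Your proposed remedy would not have worked anyway. If $\sqrt r$ did lie in $\mathbb{Q}(\zeta_{2^N})$, it would lie in every higher cyclotomic level, so enlarging the level changes nothing. The genuine fix in such a case is to pass to a higher Kummer level, for instance $r^{1/4}$.

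For odd $p$, the nontriviality of $\mathbb{Q}(\zeta_{p^N},r^{1/p})$ over $\mathbb{Q}(\zeta_{p^N})$ deserves one line. The real field $\mathbb{Q}(r^{1/p})$ has degree $p$ and is not normal over $\mathbb{Q}$, so it cannot lie in an abelian extension of $\mathbb{Q}$.
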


Both proofs run in two steps. The first shows that $n$ has at most one prime factor $\ell$ for which $\ord_\ell(q)$ is divisible by a certain prime power $r$, and the second shows that the integers with that property have density $0$.

The second step is proved in Section~\ref{ssec:analytic}. Section~\ref{ssec:densP} carries out the reduction for $\cP(q)$ and proves Proposition~\ref{prop:densP} with the prime power $r=2$. Section~\ref{ssec:densQ} carries out the reduction for $\cQ(q)$ and proves Proposition~\ref{prop:densQ} with prime powers $r$ that grow with $v_p(n)$.

\subsection{The counting lemma}\label{ssec:analytic}

The second step of both proofs is Lemma~\ref{lem:key} below. It rests on one theorem from analytic number theory.

\begin{theorem}[Wiertelak \cite{Wiertelak1978,Wiertelak1978a,Wiertelak2000}; see also \cite{Moree2005}]\label{thm:orddens}
For every integer $r\ge2$, the set of primes $\ell$ with $r\mid\ord_\ell(q)$ has positive density in the set of primes.
\end{theorem}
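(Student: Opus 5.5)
Since Theorem~\ref{thm:orddens} is quoted from the literature, I only sketch how I would prove it: apply the Chebotarev density theorem to a suitable Kummer extension of a cyclotomic field. The first step is a reduction. Write $q=p_0^{f}$ with $p_0$ prime. Then $\ord_\ell(q)=\ord_\ell(p_0)/\gcd(\ord_\ell(p_0),f)$. So it suffices to show that, for every prime $p\mid r$ with $p^{a}\,\|\,r$, the condition $v_p(\ord_\ell(p_0))\ge a+v_p(f)=:a'$ holds simultaneously for all such $p$ on a set of primes $\ell$ of positive density. Replacing $r$ by $r':=\prod_{p\mid r}p^{a'}$, we may therefore work with the prime $p_0$, which is not a perfect power in $\mathbb{Q}$.

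The second step is a local criterion at a single prime $p$. Let $\ell\nmid p_0r$ be prime, and suppose $v_p(\ell-1)=c\ge a'$. Since $\F_\ell^\times$ is cyclic, $\ord_\ell(p_0)=(\ell-1)/[\F_\ell^\times:\gen{p_0}]$, and $p^{j}$ divides the index if and only if $p_0$ is a $p^{j}$-th power modulo $\ell$, for $j\le c$. Hence $v_p(\ord_\ell(p_0))\ge a'$ holds as soon as $p_0$ is not a $p^{c-a'+1}$-th power modulo $\ell$. I would fix a convenient $c$ for each $p$, either $c=a'$ or something slightly larger, to be chosen in the fourth step. With $N_p:=p^{c-a'+1}$, the primes I want are those $\ell$ that satisfy, for each $p\mid r$, the conditions $\ell\equiv1\pmod{p^{c}}$, $\ell\not\equiv1\pmod{p^{c+1}}$, and $p_0$ not an $N_p$-th power modulo $\ell$.

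The third step translates these conditions into a Frobenius condition in $L:=\mathbb{Q}\big(\zeta_{M},\,p_0^{1/N}\big)$, where $M:=\prod_p p^{c+1}$ and $N:=\prod_p N_p$. For unramified $\ell$, the condition is that $\mathrm{Frob}_\ell$ lies in the set $\mathcal{A}$ of $\sigma\in\mathrm{Gal}(L/\mathbb{Q})$ with the following properties: $\sigma$ fixes $\zeta_{\prod p^{c}}$; for every $p$, $\sigma$ moves $\zeta_{p^{c+1}}$; and $\sigma$ fixes no $N_p$-th root of $p_0$. Since $\zeta_{N_p}$ is fixed, the last property is the same as $\sigma(p_0^{1/N_p})\ne p_0^{1/N_p}$. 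The set $\mathcal{A}$ is closed under conjugation, so Chebotarev gives density $|\mathcal{A}|/[L:\mathbb{Q}]$. It then remains only to show $\mathcal{A}\ne\emptyset$.

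The fourth step is the main obstacle: exhibiting an element of $\mathcal{A}$, which requires knowing $\mathrm{Gal}(L/\mathbb{Q})$. By Kummer theory over $\mathbb{Q}(\zeta_M)$, and because $p_0$ is not a perfect power, $[L:\mathbb{Q}(\zeta_M)]$ is $N$ or $N/2$. The defect $N/2$ occurs only when $\sqrt{p_0}\in\mathbb{Q}(\zeta_M)$, which requires $2\mid N$ together with $p_0\mid M$ or $8\mid M$. In the generic case the Galois group is the full semidirect product $(\Z/N)\rtimes(\Z/M)^\times$. There I would choose the cyclotomic part of $\sigma$ as required and then take any Kummer part that moves $p_0^{1/N}$, which moves every $p_0^{1/N_p}$ if the Kummer part is chosen to be a generator. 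In the degenerate case, the action of $\sigma$ on $\sqrt{p_0}$ is determined by its action on $\zeta_M$ through a quadratic character. Here I would first try to gain room by taking $c$ larger at $p=2$, so that $N_2\ge4$: then moving $p_0^{1/N_2}$ does not force anything on $\sqrt{p_0}$, since a Kummer part of order $4$ acting on $p_0^{1/4}$ can be chosen compatibly with either sign on $\sqrt{p_0}$. Odd $p$ are unaffected, because their Kummer parts are linearly disjoint from the quadratic subfield. This yields $\mathcal{A}\ne\emptyset$, hence positive density.
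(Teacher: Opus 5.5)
Your sketch is correct. It differs from the paper necessarily, because the paper gives no proof of Theorem~\ref{thm:orddens} and simply imports it from Wiertelak (rederived by Moree).

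The cited works determine the density of $\{\ell : r\mid\ord_\ell(q)\}$ exactly. In effect this means accounting for every level $v_p(\ell-1)=c\ge a'$, and hence for the degrees of an infinite family of Kummer fields. You instead fix one level $c_p$ for each $p\mid r$ and apply Chebotarev once in $L=\mathbb{Q}(\zeta_M,p_0^{1/N})$. That gives only a subset of density $|\mathcal{A}|/[L:\mathbb{Q}]>0$, not the exact value. This is still all that Lemma~\ref{lem:key} uses, since it needs only $\#\{i:\ell_i\le t\}\gg t/\log t$. So your route is a self-contained replacement for the citation, at the cost of the explicit density.

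The individual steps check out:
\begin{itemize}
\item The reduction from $q=p_0^{f}$ to $p_0$ is in fact an equivalence.
\item Since $a'\ge1$, you have $N_p\mid p^{c_p}$, so $N\mid M$. Hence $L/\mathbb{Q}$ is Galois and your translation of the conditions into Frobenius conditions is right.
\item In the degenerate case, $\mathrm{Gal}(L/\mathbb{Q})$ is exactly the index-two subgroup $\{(b,u):(-1)^{b}=\chi(u)\}$ of $(\Z/N)\rtimes(\Z/M)^\times$. Choose $u\equiv1+p^{c_p}\pmod{p^{c_p+1}}$ for every $p\mid r$. With $N_2\ge4$, take $b\equiv2$ or $b\equiv1\pmod{N_2}$ according as $\chi(u)=1$ or $-1$, and $b\equiv1\pmod{N_p}$ for odd $p$. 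This gives an element of $\mathcal{A}$.
\end{itemize}

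Two minor remarks. First, degeneracy requires $2\mid N$, hence $2\mid r$, so you can set $c_2:=a'_2+1$ from the outset and drop the case split. Second, ``$p_0\mid M$ or $8\mid M$'' is only a necessary condition for $\sqrt{p_0}\in\mathbb{Q}(\zeta_M)$; for $p_0\equiv3\pmod 4$ one needs $4p_0\mid M$. This is harmless because you never use the converse.

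The one input you quote without proof is that $[L:\mathbb{Q}(\zeta_M)]$ equals $N$, except when $2\mid N$ and $\sqrt{p_0}\in\mathbb{Q}(\zeta_M)$, where it equals $N/2$. This is the standard Kummer-degree lemma for a positive rational number that is not a perfect power, so citing it is appropriate.
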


\begin{lemma}\label{lem:key}
Let $r\ge2$ be an integer. Then,
\[
\{n \in \N:\ n\ \text{has at most one prime factor}\ \ell\ \text{with}\ r\mid\ord_\ell(q)\}
\]
has density $0$.
\end{lemma}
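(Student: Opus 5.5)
Let $P_r$ denote the set of primes $\ell\nmid q$ with $r\mid\ord_\ell(q)$. By Theorem~\ref{thm:orddens}, $P_r$ has positive relative density $\delta>0$ among the primes. Consequently the sum $\sum_{\ell\in P_r,\ \ell\le x}1/\ell$ diverges like $\delta\log\log x$. This follows by partial summation from the prime number theorem together with the density statement, since the primes of $P_r$ in each dyadic range $(y,2y]$ number at least $\gg y/\log y$ for large $y$. The statement of Lemma~\ref{lem:key} then becomes a standard sieve-type fact: almost all integers have many prime factors from any set of primes with divergent reciprocal sum. Our plan is to prove it by bounding the counting function of integers with at most one prime factor in $P_r$.

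Let $\omega_r(n):=\#\{\ell\in P_r:\ \ell\mid n\}$ and $E(x):=\sum_{\ell\in P_r,\ \ell\le x}1/\ell$, so that $E(x)\to\infty$. We will use the Turán--Kubilius inequality, which is elementary: it gives
\[
\sum_{n\le x}\big(\omega_r(n)-E(x)\big)^2\ll x\,E(x).
\]
The integers $n\le x$ with $\omega_r(n)\le1$ satisfy $|\omega_r(n)-E(x)|\ge E(x)-1\ge E(x)/2$ once $x$ is large. Hence their number is at most $\ll x E(x)/(E(x)/2)^2\ll x/E(x)=o(x)$. That is exactly the density-zero claim.

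Alternatively, we can avoid Turán--Kubilius with a direct sieve bound. The number of $n\le x$ free of primes of $P_r$ below $y$ is $\ll x\prod_{\ell\in P_r,\ \ell\le y}(1-1/\ell)$, for $y$ a small power of $x$, by Brun's sieve or by a simple upper bound sieve. The product tends to $0$ because $E(y)\to\infty$. The integers with exactly one such prime factor $\ell$ can be written $n=\ell m$ with $m$ free of the others. Summing over $\ell$ gives a bound $\ll x\,E(y)\exp(-E(y))$, which is still $o(x)$. We will present the Turán--Kubilius route, since it is shortest.

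The main step needing care is converting positive density (Theorem~\ref{thm:orddens}) into divergence of $E(x)$. This needs only a lower bound $\#\{\ell\in P_r:\ell\le t\}\ge(\delta/2)\pi(t)$ for $t\ge t_0$. Partial summation against $\pi(t)\sim t/\log t$ then gives $E(x)\ge(\delta/2-o(1))\log\log x$. Everything else is routine.
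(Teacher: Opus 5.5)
Your proof is correct, but it is organized differently from the paper's.

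\textbf{The paper's route.} The paper never lets the sieving range depend on $x$. It fixes $k$ and notes that ``at most one prime factor among $\ell_1,\dots,\ell_k$'' is a condition on $n$ modulo $\ell_1\cdots\ell_k$. By the Chinese remainder theorem its density is therefore exactly
$P_k\big(1+\sum_{i\le k}1/(\ell_i-1)\big)\le e^{-S_k}(1+2S_k)$, where $P_k=\prod_{i\le k}(1-1/\ell_i)$ and $S_k=\sum_{i\le k}1/\ell_i$. This bounds the upper density of the target set for every $k$, and letting $k\to\infty$ finishes, using only $S_k\to\infty$. This is essentially your second (sieve) route with $y$ fixed rather than a power of $x$. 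Fixing $y$ removes any need for Brun's sieve or for tracking error terms, at the cost of giving no rate.

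\textbf{Your route.} Your Tur\'an--Kubilius argument ties the truncation to $x$ and uses a second-moment bound. It requires controlling the floor-function error, of size $O(\pi(x)E(x))=o(x)$, in the variance computation. In return it gives an explicit bound $O(x/E(x))=O(x/\log\log x)$ for the exceptional count. Your sieve variant with $y$ a small power of $x$ would give an even larger saving, a power of $\log x$.

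\textbf{Common input.} Both arguments use nothing from Theorem~\ref{thm:orddens} beyond the divergence of $\sum_{\ell\in P_r}1/\ell$. So either one suffices for the density-zero statement needed in Propositions~\ref{prop:densP} and~\ref{prop:densQ}.
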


\begin{proof}
Let $L:=\{\ell\ \text{prime}:\ r\mid\ord_\ell(q)\}$, and let $\ell_1<\ell_2<\dots$ be its elements. By Theorem~\ref{thm:orddens}, the set $L$ has positive density in the set of primes, so $\#\{i:\ \ell_i\le t\}\gg t/\log t$, and partial summation gives $\sum_{i=1}^\infty 1/\ell_i=\infty$.

Consider $k\in \N$ and let
\[
P_k:=\prod_{i=1}^k\Big(1-\frac1{\ell_i}\Big),\qquad S_k:=\sum_{i=1}^k\frac1{\ell_i}.
\]
Those $n$ divisible by none of the $\ell_i$ have density $P_k$, and for each $i$ those divisible by $\ell_i$ and by no other have density $P_k/(\ell_i-1)$. An integer with at most one prime factor in $L$ has at most one among $\ell_1,\dots,\ell_k$, so the set in the lemma statement has upper density at most
\[
P_k\Big(1+\sum_{i\le k}\frac1{\ell_i-1}\Big) \leq e^{-S_k} (1 + 2 S_k).
\]
The result follows from taking the limit $k \to \infty$.
\end{proof}

\subsection{The primitive root condition}\label{ssec:densP}

Because Condition~\ref{it:E1} involves only primes $p\ge5$, Proposition~\ref{prop:densP} reduces to the case $r=2$ of Lemma~\ref{lem:key}.

\begin{proof}[Proof of Proposition~\ref{prop:densP}]
Let $n\in\cP(q)$, so that $\idx_{n/p}(q)=\idx_n(q)$ for some prime $p\ge5$ with $p\,\|\,n$, and let $m:=n/p$. By Proposition~\ref{prop:famA}, we have $\gcd(p-1,\ord_m(q))=1$. Since $p-1$ is even, $\ord_m(q)$ is odd. For every prime $\ell\mid m$ the reduction $\Z_m^\times\twoheadrightarrow\Z_\ell^\times$ gives $\ord_\ell(q)\mid\ord_m(q)$, hence $\ord_\ell(q)$ is odd as well. Therefore, $p$ is the only prime factor $\ell$ of $n$ with $2\mid\ord_\ell(q)$. Lemma~\ref{lem:key} with $r=2$ gives the desired result.
\end{proof}

\subsection{The square condition}\label{ssec:densQ}

Proposition~\ref{prop:densQ} requires prime powers $r$ that vary with $n$. The following lemma supplies them.

\begin{lemma}\label{lem:local}
Let $p$ be a prime, $a := v_p(n)$ and $m:=n/p^{a}$, and let $c$ be an integer with $1 \leq c < a$. If $\ord_n(q)=p^{c}\,\ord_{n/p^{c}}(q)$, then
\[
v_p\big(\ord_m(q)\big)\ \le\ a-1-c.
\]
\end{lemma}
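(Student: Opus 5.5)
The plan is to compare $p$-adic valuations of orders, using the Chinese remainder theorem to split off the $p$-part. Write $n=p^{a}m$ with $p\nmid m$, so that $n/p^{c}=p^{a-c}m$ with $a-c\ge1$. By the Chinese remainder theorem,
\[
\ord_n(q)=\operatorname{lcm}\big(\ord_{p^{a}}(q),\ord_m(q)\big),\qquad
\ord_{n/p^{c}}(q)=\operatorname{lcm}\big(\ord_{p^{a-c}}(q),\ord_m(q)\big).
\]
Set $x:=v_p(\ord_{p^{a}}(q))$, $y:=v_p(\ord_{p^{a-c}}(q))$ and $w:=v_p(\ord_m(q))$.

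The first step is to see that the hypothesis $\ord_n(q)=p^{c}\ord_{n/p^{c}}(q)$ is purely a statement about $p$-parts. Because $a-c\ge1$, the kernel $K_{p^{a},p^{a-c}}$ of $\Z_{p^{a}}^\times\twoheadrightarrow\Z_{p^{a-c}}^\times$ has order $\varphi(p^{a})/\varphi(p^{a-c})=p^{c}$. Hence it is a $p$-group, so $\ord_{p^{a}}(q)$ and $\ord_{p^{a-c}}(q)$ differ by a power of $p$. Their prime-to-$p$ parts therefore coincide, and so do those of the two lcm's. Taking $v_p$ of the hypothesis then gives
\[
\max(x,w)=\max(y,w)+c.
\]

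The second step extracts the bound. Since $c\ge1$, the left side strictly exceeds $w$, so $\max(x,w)=x$. Consequently $x=\max(y,w)+c\ge w+c$. On the other hand, $\ord_{p^{a}}(q)$ divides $\varphi(p^{a})=p^{a-1}(p-1)$, so $x\le a-1$; this holds uniformly for $p=2$ and for odd $p$. Combining, $w\le x-c\le a-1-c$, which is the claim.

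There is no real obstacle here. The only point needing care is the first step: one must check that the reduction kernel is a $p$-group, which requires $a-c\ge1$ so that no factor $p-1$ appears. This is exactly where the hypothesis $c<a$ is used.
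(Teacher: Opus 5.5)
Your proof is correct and follows the paper's argument essentially step for step: you use the Chinese remainder theorem to write both orders as lcm's, observe that their prime-to-$p$ parts agree, reduce the hypothesis to $\max(x,w)=\max(y,w)+c$, and conclude using $x\le a-1$. The only cosmetic difference is that you justify the equal prime-to-$p$ parts by noting that the reduction kernel is a $p$-group, whereas the paper observes that each $\ord_{p^{j}}(q)$ has the same prime-to-$p$ part as $\ord_p(q)$.
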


\begin{proof}
By the Chinese remainder theorem,
\[
\ord_n(q)=\operatorname{lcm}\big(\ord_{p^{a}}(q),\ord_m(q)\big),\quad
\ord_{n/p^{c}}(q)=\operatorname{lcm}\big(\ord_{p^{a-c}}(q),\ord_m(q)\big).
\]
Each $\ord_{p^{j}}(q)$ divides $\varphi(p^{j})=p^{j-1}(p-1)$ and has the same prime-to-$p$ part as $\ord_p(q)$, so the two orders above have equal prime-to-$p$ parts. Write $\alpha_j:=v_p(\ord_{p^{j}}(q))$ and $b:=v_p(\ord_m(q))$. Their $p$-parts are then $\max(\alpha_a,b)$ and $\max(\alpha_{a-c},b)$, so
\[
\max(\alpha_a,b)=\max(\alpha_{a-c},b)+c.
\]
The right-hand side exceeds $b$, so the left-hand side is $\alpha_a$, and then $\alpha_a\ge b+c$. Finally $\alpha_a\le a-1$, because $\ord_{p^{a}}(q)$ divides $p^{a-1}(p-1)$, so $b\le a-1-c$.
\end{proof}

\begin{proof}[Proof of Proposition~\ref{prop:densQ}]
Let $n\in\cQ(q)$. By definition there is a prime $p$ for which either $p$ is odd with $p^{2}\mid n$ and $\idx_{n/p}(q)=\idx_n(q)$, or $p=2$ with $8\mid n$ and $\idx_{n/4}(q)=\idx_n(q)$. Let $c:=1$ in the first case and $c:=2$ in the second, $a:=v_p(n)$ and $m:=n/p^{a}$. In both cases, every prime dividing $p^{c}$ divides $n/p^{c}$, so $\varphi(n)/\varphi(n/p^{c})=p^{c}$ and the index identity reads
\begin{equation} \label{eq:ord}
\ord_n(q)=p^{c}\,\ord_{n/p^{c}}(q).
\end{equation}
Moreover $c<a$ in both cases, since $p^{2}\mid n$ reads $2\le a$ in the first and $8\mid n$ reads $3\le a$ in the second.

For a prime $p$ and an integer $a \ge 2$, let $\cQ_{(p, a)}(q)$ be the set of $n \in \cQ(q)$ such that Eq.~\eqref{eq:ord} holds with $p$ satisfying $v_p(n)=a$. By Lemma~\ref{lem:local}, we have $v_p(\ord_m(q))\le a-1-c$, so $p^{\,a-c}\nmid\ord_\ell(q)$ for every prime $\ell\mid m$, since $\ord_\ell(q)$ divides $\ord_m(q)$. Thus $p$ is the only prime factor $\ell$ of $n$ that can satisfy $p^{\,a-c}\mid\ord_\ell(q)$, and Lemma~\ref{lem:key} with $r=p^{\,a-c}$ shows that $\cQ_{(p, a)}(q)$ has density $0$.

Let $y\ge2$, and let $\cQ^{1}(q)$ be the subset of $\cQ(q)$ consisting of those $n$ that are divisible by some $p^{a} > y$ with prime $p$ and $a\ge2$, and let $\cQ^{0}(q) := \cQ(q) \setminus \cQ^{1}(q)$. Note that $\cQ^0(q)$ is a subset of the finite union $\bigcup_{p^a \leq y} \cQ_{(p, a)}(q)$, hence has density $0$. For $\cQ^1(q)$,
\[
\#\{n \leq x : n \in \cQ^{1}(q)\} \leq x\sum_{\substack{p^{a}>y\\ a\ge2}}\frac1{p^{a}} \ll \frac{x}{\sqrt{y}}.
\]
Therefore, $\cQ(q)$ has upper density $\ll y^{-1/2}$. Since $y\ge2$ was arbitrary, that upper density is $0$.
\end{proof}

\section*{Acknowledgments}

This research was supported by grants from the NSFC (12350710787) and NSFC (12471311). The authors used Claude Opus 5, under their guidance, to (1) perform calculations, (2) assist in proofreading the manuscript, and (3) improve the English writing.

\bibliographystyle{plainnat}
\bibliography{part1}

@Article{BabaiGodsil1982,
  author   = {Babai, Laszlo and Godsil, Chris D.},
  journal  = {Eur. J. Comb.},
  title    = {On the automorphism groups of almost all {Cayley} graphs},
  year     = {1982},
  pages    = {9--15},
  volume   = {3},
  doi      = {10.1016/S0195-6698(82)80003-6},
  fjournal = {European Journal of Combinatorics},
  language = {English},
  zbl      = {0483.05033},
  zbmath   = {3758368},
}

@Book{BannaiIto1984,
  author    = {Bannai, Eiichi and Ito, Tatsuro},
  publisher = {The Benjamin/Cummings Publishing Company, Reading, MA},
  title     = {Algebraic combinatorics. {I}: {Association} schemes},
  year      = {1984},
  series    = {Math. Lect. Note Ser.},
  fseries   = {Mathematics Lecture Note Series},
  language  = {English},
  zbl       = {0555.05019},
  zbmath    = {3884178},
}

@Article{BergerCharpin1996,
  author   = {Berger, Thierry P. and Charpin, Pascale},
  journal  = {IEEE Trans. Inf. Theory},
  title    = {The permutation group of affine-invariant extended cyclic codes},
  year     = {1996},
  number   = {6},
  pages    = {2194--2209},
  volume   = {42},
  doi      = {10.1109/18.556607},
  fjournal = {IEEE Transactions on Information Theory},
  language = {English},
  zbl      = {0883.94012},
  zbmath   = {1008638},
}

@Article{Berman1967,
  author       = {Berman, S. D.},
  journal      = {Cybernetics},
  title        = {Semisimple cyclic and abelian codes. {II}},
  year         = {1967},
  number       = {3},
  pages        = {17--23},
  volume       = {3},
  doi          = {10.1007/BF01119999},
  language     = {English},
  zbl          = {1026.94551},
  zbmath       = {1251956},
}

@Article{BienertKlopsch2010,
  author   = {Bienert, Rolf and Klopsch, Benjamin},
  journal  = {J. Algebr. Comb.},
  title    = {Automorphism groups of cyclic codes},
  year     = {2010},
  number   = {1},
  pages    = {33--52},
  volume   = {31},
  doi      = {10.1007/s10801-009-0179-y},
  fjournal = {Journal of Algebraic Combinatorics},
  language = {English},
  zbl      = {1195.94083},
  zbmath   = {5682322},
}

@Article{BoseMesner1959,
  author   = {Bose, R. C. and Mesner, Dale M.},
  journal  = {Ann. Math. Stat.},
  title    = {On linear associative algebras corresponding to association schemes of partially balanced designs},
  year     = {1959},
  pages    = {21--38},
  volume   = {30},
  doi      = {10.1214/aoms/1177706356},
  fjournal = {Annals of Mathematical Statistics},
  language = {English},
  zbl      = {0089.15002},
  zbmath   = {3145663},
}

@Article{BoseShimamoto1952,
  author   = {Bose, R. C. and Shimamoto, T.},
  journal  = {J. Am. Stat. Assoc.},
  title    = {Classification and analysis of partially balanced incomplete block designs with two associate classes},
  year     = {1952},
  pages    = {151--184},
  volume   = {47},
  doi      = {10.2307/2280741},
  fjournal = {Journal of the American Statistical Association},
  language = {English},
  zbl      = {0048.11603},
  zbmath   = {3075255},
}

@InCollection{Charpin1998,
  author    = {Charpin, Pascale},
  booktitle = {Handbook of coding theory. Vol. 1. Part 1: Algebraic coding. Vol. 2. Part 2: Connections, Part 3: Applications},
  publisher = {Amsterdam: Elsevier},
  title     = {Open problems on cyclic codes},
  year      = {1998},
  pages     = {963--1063},
  language  = {English},
  zbl       = {0927.94017},
  zbmath    = {1284424},
}

@Article{Delsarte1973,
  author    = {Delsarte, P.},
  journal   = {Philips Res. Rep. Suppl.},
  title     = {An algebraic approach to the association schemes of coding theory},
  year      = {1973},
  volume    = {10},
  fjournal  = {Philips Research Reports Supplements},
  language  = {English},
  zbl       = {1075.05606},
  zbmath    = {2232233},
}

@Article{EvdokimovPonomarenko2003,
  author   = {Evdokimov, S. A. and Ponomarenko, I. N.},
  journal  = {St. Petersbg. Math. J.},
  title    = {Characterization of cyclotomic schemes and normal {Schur} rings over a cyclic group},
  year     = {2003},
  number   = {2},
  pages    = {11--55},
  volume   = {14},
  fjournal = {St. Petersburg Mathematical Journal},
  language = {English},
  zbl      = {1056.20001},
  zbmath   = {2007658},
}

@Article{EvdokimovPonomarenko2008,
  author   = {Evdokimov, S. A. and Ponomarenko, I. N.},
  journal  = {St. Petersbg. Math. J.},
  title    = {Normal cyclotomic schemes over a finite commutative ring},
  year     = {2008},
  number   = {6},
  pages    = {911--929},
  volume   = {19},
  doi      = {10.1090/S1061-0022-08-01027-3},
  fjournal = {St. Petersburg Mathematical Journal},
  language = {English},
  zbl      = {1206.13029},
  zbmath   = {5859437},
}

@Misc{Feng2026,
  author       = {Tao Feng and Henk D. L. Hollmann and Weicong Li and Qing Xiang},
  howpublished = {Preprint, {arXiv}:2603.01904 [math.{CO}]},
  title        = {The permutation automorphism groups of irreducible cyclic codes},
  year         = {2026},
  arxiv        = {arXiv:2603.01904},
  url          = {https://arxiv.org/abs/2603.01904},
}

@Article{GoldbachClaasen1992,
  author   = {Goldbach, R. W. and Claasen, H. L.},
  journal  = {Indag. Math., New Ser.},
  title    = {Cyclotomic schemes over finite rings},
  year     = {1992},
  number   = {3},
  pages    = {301--312},
  volume   = {3},
  doi      = {10.1016/0019-3577(92)90037-L},
  fjournal = {Indagationes Mathematicae. New Series},
  language = {English},
  zbl      = {0780.05059},
  zbmath   = {469238},
}

@Article{GoldbachClaasen1992a,
  author   = {Goldbach, R. W. and Claasen, H. L.},
  journal  = {Indag. Math., New Ser.},
  title    = {Cyclotomic schemes over finite, commutative, admissible rings},
  year     = {1992},
  number   = {3},
  pages    = {277--299},
  volume   = {3},
  doi      = {10.1016/0019-3577(92)90036-K},
  fjournal = {Indagationes Mathematicae. New Series},
  language = {English},
  zbl      = {0780.05058},
  zbmath   = {469237},
}

@Article{GuendaGulliver2013,
  author   = {Guenda, Kenza and Gulliver, T. Aaron},
  journal  = {J. Algebr. Comb.},
  title    = {On the permutation groups of cyclic codes},
  year     = {2013},
  number   = {1},
  pages    = {197--208},
  volume   = {38},
  doi      = {10.1007/s10801-012-0399-4},
  fjournal = {Journal of Algebraic Combinatorics},
  language = {English},
  zbl      = {1273.94405},
  zbmath   = {6192144},
}

@Article{Higman1971,
  author   = {Higman, D. G.},
  journal  = {Rend. Semin. Mat. Univ. Padova},
  title    = {Coherent configurations. {I}},
  year     = {1971},
  pages    = {1--25},
  volume   = {44},
  fjournal = {Rendiconti del Seminario Matematico della Universit{\`a} di Padova},
  language = {English},
  url      = {https://eudml.org/doc/107354},
  zbl      = {0279.05025},
  zbmath   = {3438886},
}

@Article{HuffmanJobPless1993,
  author   = {Huffman, W. Cary and Job, Vanessa and Pless, Vera S.},
  journal  = {J. Comb. Theory, Ser. A},
  title    = {Multipliers and generalized multipliers of cyclic objects and cyclic codes},
  year     = {1993},
  number   = {2},
  pages    = {183--215},
  volume   = {62},
  doi      = {10.1016/0097-3165(93)90043-8},
  fjournal = {Journal of Combinatorial Theory. Series A},
  language = {English},
  zbl      = {0772.94011},
  zbmath   = {221983},
}

@Book{HuffmanPless2003,
  author    = {Huffman, W. Cary and Pless, Vera},
  publisher = {Cambridge: Cambridge University Press},
  title     = {Fundamentals of error-correcting codes},
  year      = {2003},
  doi       = {10.1017/CBO9780511807077},
  language  = {English},
  zbl       = {1099.94030},
  zbmath    = {2006094},
}

@Article{LeungMa1990,
  author   = {Leung, Ka Hin and Ma, Siu Lun},
  journal  = {J. Pure Appl. Algebra},
  title    = {The structure of {Schur} rings over cyclic groups},
  year     = {1990},
  number   = {3},
  pages    = {287--302},
  volume   = {66},
  doi      = {10.1016/0022-4049(90)90032-D},
  fjournal = {Journal of Pure and Applied Algebra},
  language = {English},
  zbl      = {0761.20001},
  zbmath   = {5595},
}

@Article{LeungMan1996,
  author   = {Leung, Ka Hin and Man, Shing Hing},
  journal  = {J. Algebra},
  title    = {On {Schur} rings over cyclic groups. {II}},
  year     = {1996},
  number   = {2},
  pages    = {273--285},
  volume   = {183},
  doi      = {10.1006/jabr.1996.0220},
  fjournal = {Journal of Algebra},
  language = {English},
  zbl      = {0860.20005},
  zbmath   = {923685},
}

@Article{LeungMan1998,
  author   = {Leung, Ka Hin and Man, Shing Hing},
  journal  = {Isr. J. Math.},
  title    = {On {Schur} rings over cyclic groups},
  year     = {1998},
  pages    = {251--267},
  volume   = {106},
  doi      = {10.1007/BF02773471},
  fjournal = {Israel Journal of Mathematics},
  language = {English},
  zbl      = {0962.20002},
  zbmath   = {1216496},
}

@Article{MacWilliams1970,
  author   = {MacWilliams, F. J.},
  journal  = {Bell Syst. Tech. J.},
  title    = {Binary codes which are ideals in the group algebra of an {Abelian} group},
  year     = {1970},
  pages    = {987--1011},
  volume   = {49},
  doi      = {10.1002/j.1538-7305.1970.tb01812.x},
  fjournal = {Bell System Technical Journal},
  language = {English},
  zbl      = {0205.20501},
  zbmath   = {3326145},
}

@Book{MacWilliamsSloane1977,
  author    = {MacWilliams, F. J. and Sloane, N. J. A.},
  publisher = {Elsevier (North-Holland), Amsterdam},
  title     = {The theory of error-correcting codes. {Parts} {I}, {II}},
  year      = {1977},
  series    = {North-Holland Math. Libr.},
  volume    = {16},
  fseries   = {North-Holland Mathematical Library},
  language  = {English},
  zbl       = {0369.94008},
  zbmath    = {3577144},
}

@Article{Moree2005,
  author   = {Moree, Pieter},
  journal  = {Funct. Approximatio, Comment. Math.},
  title    = {On primes {{\(p\)}} for which {{\(d\)}} divides {{\(\text{ord}_p(g)\)}}},
  year     = {2005},
  pages    = {85--95},
  volume   = {33},
  fjournal = {Functiones et Approximatio. Commentarii Mathematici},
  language = {English},
  zbl      = {1228.11152},
  zbmath   = {5135205},
}

@Article{Muzychuk1994,
  author   = {Muzychuk, Mikhail E.},
  journal  = {J. Algebra},
  title    = {On the structure of basic sets of {Schur} rings over cyclic groups},
  year     = {1994},
  number   = {2},
  pages    = {655--678},
  volume   = {169},
  doi      = {10.1006/jabr.1994.1302},
  fjournal = {Journal of Algebra},
  language = {English},
  zbl      = {0810.20005},
  zbmath   = {682589},
}

@Article{MuzychukPonomarenko2009,
  author   = {Muzychuk, Mikhail E. and Ponomarenko, Ilia},
  journal  = {Eur. J. Comb.},
  title    = {Schur rings},
  year     = {2009},
  number   = {6},
  pages    = {1526--1539},
  volume   = {30},
  doi      = {10.1016/j.ejc.2008.11.006},
  fjournal = {European Journal of Combinatorics},
  language = {English},
  zbl      = {1195.20003},
  zbmath   = {5640334},
}

@Article{Schur1933,
  author   = {Schur, I.},
  journal  = {Sitzungsber. Preu{{\ss}}. Akad. Wiss., Phys.-Math. Kl.},
  title    = {Zur {Theorie} der einfach transitiven {Permutationsgruppen}},
  year     = {1933},
  pages    = {598--623},
  volume   = {1933},
  fjournal = {Sitzungsberichte der Preu{{\ss}}ischen Akademie der Wissenschaften, Physikalisch-Mathematische Klasse},
  language = {German},
  zbl      = {0007.14903},
  zbmath   = {3009780},
}

@Article{WeisfeilerLehman1968,
  author  = {Weisfeiler, Boris and Lehman, A. A.},
  journal = {Nauchno-tekhn Inform},
  title   = {A reduction of a graph to a canonical form and an algebra arising during this reduction},
  year    = {1968},
  number  = {9},
  pages   = {12--16},
  volume  = {2},
}

@Book{Wielandt1964,
  author       = {Wielandt, Helmut},
  publisher    = {Academic Press},
  title        = {Finite permutation groups},
  year         = {1964},
  address      = {New York and London},
  language     = {English},
  zbl          = {0138.02501},
  zbmath       = {3223737},
}

@Article{Wiertelak1978,
  author   = {Wiertelak, K.},
  journal  = {Acta Arith.},
  title    = {On the density of some sets of primes. {I}},
  year     = {1978},
  pages    = {183--196},
  volume   = {34},
  doi      = {10.4064/aa-34-3-183-196},
  fjournal = {Acta Arithmetica},
  language = {English},
  zbl      = {0373.10029},
  zbmath   = {3580638},
}

@Article{Wiertelak1978a,
  author   = {Wiertelak, K.},
  journal  = {Acta Arith.},
  title    = {On the density of some sets of primes. {II}},
  year     = {1978},
  pages    = {197--210},
  volume   = {34},
  doi      = {10.4064/aa-34-3-197-210},
  fjournal = {Acta Arithmetica},
  language = {English},
  zbl      = {0373.10030},
  zbmath   = {3580639},
}

@Book{Zieschang2005,
  author    = {Zieschang, Paul-Hermann},
  publisher = {Berlin: Springer},
  title     = {Theory of association schemes},
  year      = {2005},
  series    = {Springer Monogr. Math.},
  doi       = {10.1007/3-540-30593-9},
  fseries   = {Springer Monographs in Mathematics},
  language  = {English},
  zbl       = {1079.05099},
  zbmath    = {2206388},
}

@Book{ChenPonomarenko2019,
  author    = {Gang Chen and Ilia Ponomarenko},
  publisher = {Central China Normal University Press},
  title     = {Coherent Configurations},
  year      = {2019},
  address   = {Wuhan},
  note      = {Updated online version of the lecture notes},
  url       = {http://www.pdmi.ras.ru/~inp/ccNOTES.pdf},
}

@Article{Wiertelak2000,
  author   = {Wiertelak, K.},
  journal  = {Funct. Approximatio, Comment. Math.},
  title    = {On the density of some sets of primes {{\(p\)}}, for which {{\(n\mid \text{ord}_p a\)}}},
  year     = {2000},
  pages    = {237--241},
  volume   = {28},
  doi      = {10.7169/facm/1538186700},
  fjournal = {Functiones et Approximatio. Commentarii Mathematici},
  language = {English},
  zbl      = {1009.11056},
  zbmath   = {1599295},
}

\end{document}